\newtheorem{theorem}{Theorem}[section]
\newtheorem{lemma}[theorem]{Lemma}
\newtheorem{corollary}[theorem]{Corollary}
\newtheorem{Hypotheses}{Hypotheses}
\theoremstyle{definition}
\newtheorem{remark}[theorem]{Remark}
\numberwithin{equation}{section}
\renewcommand{\Im}{{\ensuremath{\mathrm{Im\,}}}} 
\renewcommand{\Re}{{\ensuremath{\mathrm{Re\,}}}} 
\renewcommand{\div}{\mathrm{div}\,}    
\newcommand\restr[2]{{
  \left.\kern-\nulldelimiterspace 
  #1 
  \vphantom{\big|} 
  \right|_{#2} 
  }}
\date{\today}
\title[Photo-acoustic inversion using plasmonics]{SIMULTANEOUS RECONSTRUCTION OF OPTICAL AND ACOUSTICAL
PROPERTIES IN PHOTO-ACOUSTIC IMAGING USING PLASMONICS
}
\author[Ghandriche and Sini]{Ahcene Ghandriche  $^*$ and Mourad Sini$^{\ddag}$}
\thanks{$^*$ RICAM, Austrian Academy of Sciences, Altenbergerstrasse 69, A-4040, Linz, Austria. Email: ahcene.ghandriche@ricam.oeaw.ac.at. This author is supported by the Austrian Science Fund (FWF): P 30756-NBL}
\thanks{$^{\ddag}$ RICAM, Austrian Academy of Sciences, Altenbergerstrasse 69, A-4040, Linz, Austria. Email: mourad.sini@oeaw.ac.at. This author is partially supported by the Austrian Science Fund (FWF): P 30756-NBL}
\begin{document}

\subjclass[2010]{35R30, 35C20}
\keywords{photo-acoustic imaging, plasmonic nanoparticles, surface plasmon resonance, inverse problems, Maxwell system, Eikonal equation, travel time.}
\maketitle

\begin{abstract}
We propose an approach for the simultaneous reconstruction of the electromagnetic and acoustic material parameters, in the given medium $\Omega$ where to image, using the photo-acoustic pressure, measured on a single  point of the boundary of $\Omega$, generated by plasmonic nanoparticles. We prove that the generated pressure, that we denote by $p^{\star}\left(x, s, \omega \right)$, depending on only one fixed point $x \in \partial \Omega$, the time variable $s$, in a large enough interval, and the incidence frequency $\omega$, in a large enough band, is enough to reconstruct both the sound speed, the mass density and the permittivity inside $\Omega$. Indeed, from the behavior of the measured pressure in terms of time, we can estimate the travel time of the pressure, for arriving points inside $\Omega$, then using the eikonal equation we reconstruct the acoustic speed of propagation. In addition, we reconstruct the internal values of the acoustic Green's function. From the singularity analysis of this Green's function, we extract the integrals along the geodesics, for internal arriving points, of the logarithmic-gradient of the mass density. Solving this (internal) integral geometric problem provides us with the values of the mass density function inside $\Omega$. Finally, from the behavior of $p^{\star}\left(x, s, \omega \right)$ with respect to the frequency $\omega$, we detect the generated plasmonic resonances from which we reconstruct the permittivity inside $\Omega$.
\end{abstract}

\bigskip

\section{Introduction and statement of the results}
\subsection{Introduction of the mathematical model}
We deal with the photo-acoustic imaging modality. In this technique, we use a laser to excite a medium to image. As a consequence, the medium is heated and produces an elastic expansion that in turn generates a sound waves. This sound wave can be measured on the boundary, or in the exterior, of the medium. The goal of this technique is to reconstruct optical, and eventually acoustical, properties of the medium. Recently there has been much interest in this imaging technique which finds several applications in medical imaging. Without being exhaustive, we refer the reader to the following literature \cite{Habib-book, A-B-G-J:2012, B-U:2010,B-E-K-S:2018,B-G-S:2016, C-A-B:2007, H-B-P-S, Kirsch-Scherzer,K-K:2010,KuchmentKunyansky,N-S:2014,S:2010,S-U:2009, S-Y-2017,B-B-M-T}. In these works, the heat is created by a laser probe due to the presence of a pronounce absorption in the tissue to image. 

In the current work, we inject nanoparticles into the tissue that can create the heat while excited by lasers. 
Therefore, this technique has the potential to be applied also for tissues having very less absorption as themselves they create the needed contrast of absorption, i.e. they play the role of contrast agents. These nanoparticles can be either plasmonics or dielectrics. Both of them have small absorption (as compared to their diffusion character), i.e. the ratio between the imaginary part and the real part of the permittivity is small. However,  if we excite them with incident frequencies close to certain critical frequencies, called plasmonic or dielectric resonances, then the amount of electric (or magnetic) field generated will be enhanced and this will compensate the weakness of the absorption. As the source of the heat is generated by the product of the (imaginary part) of the permittivity and the square of the modulus of the electric field, see below, then the mentioned compensation makes sense. This is, in short, the principle of the Photo-Acoustic effect using resonating nano-particles.

The mathematical formulation of this imaging technique is based on the following equations 
\begin{equation*}
\left \{
\begin{array}{llrr}

Curl \circ Curl\; E -\omega^2\; \varepsilon\; \mu\; E=0,~~~ E:=E^s+E^i, \mbox{ in } \mathbb{R}^{3},\\

\rho_0 c_p\dfrac{\partial \bm{T}}{\partial t}-\nabla \cdot \kappa \nabla \bm{T} =\omega\; \Im(\varepsilon)\;\vert E \vert^2\; \delta_{0}(t),\; \mbox{ in } \mathbb{R}^{3}\times \mathbb{R}_+,\\

\dfrac{1}{c^2}\dfrac{\partial^2 p}{\partial t^2}-\rho^{-1}\nabla \cdot (\frac{1}{\rho}\nabla) p= \rho_0\; \beta_0\; \dfrac{\partial^2 \bm{T}}{\partial t^2}, \mbox{ in } \mathbb{R}^{3}\times \mathbb{R}_+,
\end{array} \right.
\end{equation*}
where $\omega$ is an incident frequency, $\varepsilon$ the electric permittivity, $\mu$ the magnetic permeability that we assume everywhere constant in $\mathbb{R}^{3}$, $E$ is the electric field, $\bm{T}$ the heat temperature, $p$ the acoustic pressure, $\rho$ (resp. $\rho_0$) is the mass density of the medium (reps. the background), $c_p$ the heat capacity, $\kappa$ is the heat conductivity, $c$ is the wave speed and $\beta_0$ is the thermal expansion coefficient. 
To the last two equations, we supplement the homogeneous initial conditions $
 \bm{T} =p=\dfrac{\partial p}{\partial t}=0,$ at $t=0$ and the Silver-M\"{u}ller radiation condition to $E^s$, see $(\ref{eq:electromagnetic_scattering})$. Under the condition that the heat conductivity is relatively small, the model above reduces to the following one
\begin{equation}\label{pressurwaveequa}
\left\{
\begin{array}{rll}
\partial^{2}_{t} p(x,t) - c^{2}(x) \, \rho(x) \underset{x}{\nabla} \cdot \left( \frac{1}{\rho(x)} \underset{x}{\nabla} p(x,t) \right) &=& 0, \quad \text{in} \quad \mathbb{R}^{3} \times \mathbb{R}^{+},\\
    p(x,0) &=& \dfrac{\omega \, \beta_{0}}{c_{p}} \; \Im(\varepsilon)(x) \; \vert E \vert^{2}(x), \qquad in \quad \mathbb{R}^{3}, \\ 
    \partial_{t}p(x,0) &=& 0, \qquad in \quad \mathbb{R}^{3}. 
    \end{array}
\right.
\end{equation}
More details on the actual derivation of this model can be found in \cite{P-P-B:2015, Triki-Vauthrin:2017} and more references therein. 
The source $E$ is solution of the scattering problem
\begin{equation}\label{eq:electromagnetic_scattering}
\left\{
\begin{array}{rll}
Curl \circ Curl \; E - \omega^2\; \varepsilon\; \mu\; E=0,~~~ E:=E^s+E^i, \mbox{ in } \mathbb{R}^{3},\\
\\
E^{i}(x) := d \, e^{i \, \omega \, \sqrt{\mu \, \epsilon_{\infty}} \, x \cdot \theta} \, \left\vert d \right\vert = \left\vert \theta \right\vert = 1, \, d \cdot \theta = 0, \\
\\
\underset{\left\vert x \right\vert \rightarrow + \infty}{\lim} \left\vert x \right\vert \, \left( \nabla \times E^s(x) \times \frac{x}{\left\vert x \right\vert} - \omega \, \sqrt{\mu \, \epsilon_{\infty}} \, E^s(x) \right) = 0,
\end{array}
\right.
\end{equation}
where the permittivity function $\varepsilon(\cdot)$ is defined as:
\begin{equation}\label{permittivityfct}
\varepsilon(\cdot) = \begin{cases}
\epsilon_{\infty} & \text{in} \quad \mathbb{R}^{3}\setminus \Omega, \\
\epsilon_{0}(\cdot) & \text{in} \quad  \Omega \setminus D, \\
\epsilon_{p} & \text{in} \quad D,
\end{cases}
\end{equation} 
with $\Omega$ as a bounded domain that represents the region where to do the imaging. 
Here $D \subset \Omega$ is the injected plasmonic nano-particle with permittivity $\epsilon_{p}$, permeability $\mu$, location $z$ and radius $a$, $D:=z+a\; B$ where $B$ contains the origin and is of maximum radius $1$. The parameter $a$ is taken to be small as compared to the maximum radius of $B$. The permeability $\mu$ is the same as the one of the background $\mu_\infty$. The permittivity $\epsilon_p$ at the nanoparticle  is related to the Lorentz model 
\begin{equation}\label{plasmonic}
\epsilon_p(\omega) := \epsilon_{\infty}\left( 1+\frac{\omega^2_p}{\omega^2_0-\omega^2+i \omega \gamma_p} \right)
\end{equation}
with $\omega_p$ as the electric plasma frequency, $\omega_0$ as the undamped frequency and $\gamma_p$ as the electric damping frequency which is assumed small, i.e. $0\leq \gamma_p \ll 1$, as $a\ll 1$.

Moreover, we assume that $\Im\left( \epsilon_{0}(\cdot) \right)$ is small,
i.e. 
\begin{equation}\label{SELZ}
\bm{\gamma} := \left\Vert \Im\left( \epsilon_{0}(\cdot) \right) \right\Vert_{\mathbb{L}^{\infty}\left(\Omega \right)}={ \scriptstyle \mathcal{O}}(1), \mbox{ as } a\ll 1.
\end{equation}
The order of the smallness of $\bm{\gamma}$ will be discussed later. The permittivity $\epsilon_{0}(\cdot)$ is variable and it is supposed to be smooth inside $\Omega$.
\bigskip
\newline
With such conditions, we show that $E$, the solution of (\ref{eq:electromagnetic_scattering}), is in $L^4(\Omega)$, see Section \ref{33RegE}.
\bigskip

The well-posedness of the problem $(\ref{pressurwaveequa})$ is investigated in the references \cite{salo} and \cite{smith}, where they proved that if the mass density $\rho$ is uniformly constant, under the condition of $\mathbb{L}^{2}(\mathbb{R}^{3})-$integrability of the source term, i.e. in our setting
\begin{equation*}
\frac{\omega \, \beta_{0}}{c_{p}} \, \Im\left( \varepsilon \right) \, \left\vert E \right\vert^{2} \in \mathbb{L}^{2}\left( \mathbb{R}^{3} \right), 
\end{equation*}
we have existence and uniqueness of a weak solution\footnote{For $\bm{X}$ a Banach space and $\bm{I} \subset \mathbb{R}$, the space $\mathcal{C}^{k}\left(\bm{I};\bm{X} \right)$ comprises all function $f : \bm{I} \rightarrow \bm{X}$ with $k$ continuous derivatives.} 
\begin{equation}\label{regP}
p(\cdot,\cdot) \in \mathcal{C}\left([-M;M]; \mathbb{L}^{2}\left( \mathbb{R}^{3} \right) \right) \cap \mathcal{C}^{1}\left([-M;M]; \mathbb{H}^{-1}\left( \mathbb{R}^{3} \right) \right),
\end{equation}
 solution of $(\ref{pressurwaveequa})$ in distributional sense, where the constant $M$ is chosen large such that:\footnote{We recall that: 
\begin{equation*}
\left\Vert f \right\Vert_{\mathcal{C}^{1,1}\left(\mathbb{R}^{3}\right)}:= \left\Vert f \right\Vert_{\mathcal{C}^{1}\left(\mathbb{R}^{3}\right)} + \underset{\left\vert \beta \right\vert = 1}{\max} \;\; \underset{x,y \in \mathbb{R}^{3} \atop x \neq y}{Sup} \;\; \frac{\left\vert \partial^{\beta} f(x) - \partial^{\beta} f(y) \right\vert}{\left\vert x-y \right\vert}. 
\end{equation*}
}
\begin{equation}\label{low-upper-condition-c}
\left\Vert c^{2}(\cdot) \right\Vert_{\mathcal{C}^{1,1}\left( \mathbb{R}^{3} \right)} \leq M \quad \text{and} \quad c(\cdot) \geq M^{-1}.
\end{equation}

\medskip

For a variable mass density $\rho$, the existence and uniqueness of the solution of the problem $(\ref{pressurwaveequa})$ are investigated in Subsection \ref{Eu}.



\subsection{Integral representation of the solution }\label{RG}

We associate to the problem $(\ref{pressurwaveequa})$, the Green's kernel solution, in the distributional sense, of  
\begin{equation}\label{pa2ndCGreen}
\begin{cases} 
\partial_{t}^{\prime\prime} G(x,t,y,s) - c^{2}(x) \,  \underset{x}{\Delta} \, G(x,t,y,s) \, + c^{2}(x) \, \nabla \log(\rho(x)) \cdot \underset{x}{\nabla} \, G(x,t,y,s) = \underset{y}{\delta}(x) \, \underset{s}{\delta}(t) , \quad \text{in} \quad \mathbb{R}^{3} \times \mathbb{R}^{+}, \\ 
\qquad \qquad \qquad \qquad G(x,0) = 0, \quad \text{in} \quad \mathbb{R}^{3}, \\
\qquad \qquad \qquad \quad \partial_{t} G(x,0) = 0, \quad \text{in} \quad \mathbb{R}^{3}.
\end{cases}
\end{equation}
Thanks to \cite{romanov2009smoothness}, formula (1.9) therein, we know that the solution to $(\ref{pa2ndCGreen})$ is represented as: 
\begin{equation}\label{Green'skernelvarspeed}
G(x,t,y) :=  \sum_{k = -1}^{+\infty} \alpha_{k}(x,y) \,\, \Theta_{k}\left( t^{2}-\tau^{2}(x,y)\right), \quad x \neq y, \quad t \geq 0,
\end{equation}
where the function $\tau(x,y)$ is the travelling time of a signal from a point $x$, in our case fixed on the boundary $\partial \Omega$, to a point $y \in \Omega$. Evidently, $\tau(x,y) = \tau(y,x)$. This function $\tau(\cdot,\cdot)$ defines a Riemannian metric with arc length $d\tau$ given by 
\begin{equation*}
d\tau = c^{-1}(y) \; \sqrt{\sum_{j=1}^{3} \, \left( d \, y_{j} \right)^{2}}.
\end{equation*}
  The sequence of functions $\Theta_{k}\left( \cdot \right), k \geq -1,$ is defined as follows: 
\begin{equation}\label{defThetak}
\Theta_{-1}(t) = \underset{0}{\delta}(t),\,\, \Theta_{0}(t) = \begin{cases}
      1, & \text{if $t \geq 0,$} \\
      0, & \text{if $ t < 0,$}
    \end{cases} \quad \text{and} \quad  \Theta_{k}(t) = \frac{t^{k}}{k!} \, \Theta_{0}(t), \,\, k \geq 1. 
\end{equation}
Moreover, the sequence of functions $\alpha_{k}(x,y), \, k \geq -1,$ are infinitely differentiable functions and are defined recurrently by
\begin{equation}\label{defalpha-1}
\alpha_{-1}(x,y) := \frac{1}{2 \, \pi} \left( \det \, \frac{\partial}{\partial \, x} \left( \frac{-1}{2} \left(\underset{y}{\nabla} \tau^{2}(x,y) \right)^{tr} \right) \right)^{\frac{1}{2}} \,\, \exp\left( \frac{1}{2} \, \int_{\Gamma(x,y)} \langle \underset{\xi}{\nabla} \log(\rho(\xi)) ; d\xi \rangle \right)
\end{equation}
and 
\begin{eqnarray}\label{defalphak}
\nonumber
\alpha_{k}(x,y) &:=& \frac{\alpha_{-1}(x,y)}{4 \, \, \tau(x,y)^{k+1}} \, \int_{\Gamma(x,y)}\,\frac{c^{2}(\xi) \, \underset{\xi}{\Delta} \,  \alpha_{k-1}(\xi,y)}{\alpha_{-1}(\xi,y)} \,\tau(\xi,y)^{k} \, d\tau(\xi,y) \\
&-& \frac{\alpha_{-1}(x,y)}{4 \, \, \tau(x,y)^{k+1}} \, \int_{\Gamma(x,y)}\,\frac{c^{2}(\xi) \, \nabla \log(\rho(\xi)) \cdot \underset{\xi}{\nabla} \,  \alpha_{k-1}(\xi,y)}{\alpha_{-1}(\xi,y)} \,\tau(\xi,y)^{k} \, d\tau(\xi,y), \,\, k \geq 0,
\end{eqnarray} 
where the function $\Gamma(x,y)$ represents the geodesic, in the metric $\tau(\cdot,\cdot)$, connecting point $y \in \Omega$ to point $x$ of the boundary $\partial \Omega$. The point $\xi = (\xi_{1},\xi_{2},\xi_{3})$, is on $\Gamma(x,y)$, represents the Riemannian coordinates and it is given by
$
\xi = - \frac{1}{2} \, c^{2}(y) \, \underset{y}{\nabla} \tau^{2}(x,y) \equiv h(x,y).$
\medskip
\newline
The following assumptions are needed to derive (\ref{Green'skernelvarspeed}), which will be used in the sequel, therefore, we state them as hypotheses.
\begin{Hypotheses}\label{Hyp}
The metric $\tau$, and also the families of geodesics $\Gamma(\cdot,\cdot)$, are taken satisfying the following properties:  
\begin{enumerate}
\item Any two points of the domain $\Omega$ are connected by a unique geodesic $\Gamma(\cdot,\cdot)$, of the metric $\tau$, contained in $\Omega$ and with ends points on the boundary $\partial \Omega$. 
\item[]
\item The boundary $\partial \Omega$ is convex relative to these geodesics.
\end{enumerate}
\end{Hypotheses}
\medskip


As shown in Section $\ref{AppendixOne}$ the solution of $(\ref{pressurwaveequa})$ can be expressed through the source term by the formula
\begin{equation}\label{SolutionPressure}
p(x,t) =  \frac{\omega \, \beta_{0}}{c_{p}} \,\, \partial_{t} \,\int_{\Omega} G(y,t,x) \,\, \Im\left( \varepsilon \right)(y) \,\, \left\vert E \right\vert^{2}(y)  \,\,  dy.
\end{equation}

To avoid making the text more cumbersome, we warn the reader that we omit to note the multiplicative constant  $\dfrac{\omega \, \beta_{0}}{c_{p}}$ in the integral expression of the solution given by $(\ref{SolutionPressure})$. In addition, to fix notations, in the presence of one particle, we set $E:=u_{1}$ and  we rewrite the representation  of the solution $p(\cdot,\cdot)$, as:
\begin{equation}\label{SolutionPressureI}
p(x,t) = \partial_{t} \,\int_{\Omega} G(x,t,y) \,  \, \Im\left( \varepsilon \right)(y) \, \left\vert u_{1} \right\vert^{2}(y)  \,  dy.
\end{equation}

\subsection{Statement of the results}
We need the following theorem to analyze the variation of the averaged  pressure after injecting single plasmonic nano-particles $D$ inside $\Omega$. 
\begin{theorem}\label{THMVarSpeedCvar}
Assume the permittivity of the medium $\epsilon_{0}$ to be of class $C^1(\Omega)$ and constant outside $\Omega$. In addition, we assume that the permittivity of the nano-particle to be given by $(\ref{plasmonic})$. 
Regarding the acoustic model, we assume
the sound speed $c$ and the mass density $\rho$ of class $C^\infty(\mathbb{R}^3)$ such that $(\ref{low-upper-condition-c})$ and Hypotheses \ref{Hyp} are satisfied. \footnote{The $C^{\infty}$ regularity of $c$ and $\rho$ is used to derive the singularity analysis (\ref{Green'skernelvarspeed}) of the Green's function of the wave equation in \cite{romanov2013integral}. It can be reduced considerably, see \cite{romanov2009smoothness}. In addition, to reconstruct the permittivity function $\epsilon_0$ and the speed of sound $c$, this expansion is not needed. It is only needed to reconstruct the mass density $\rho$.}
Under these conditions, we have the following approximations of the average pressure:
\begin{enumerate}
\item Before the entrance time, i.e.  $s < \tau_{1}(x, z):= \left(\underset{y \in D}{Inf} \,\, \tau(x,y) \right) - a$,
\begin{equation}\label{I1!}
p^{\star}(x,s) := \int_{0}^{s} 2 r \int_{0}^{r} p(x,t)  dt \, dr = \mathcal{O}\left( \bm{\gamma} \right). 
\end{equation}
\item After the exit time, i.e. $ s > \tau_{2}(x, z):=\left(\underset{y \in D}{Sup} \,\, \tau(x,y) \right) + a $,
\begin{equation}\label{I2!}
p^{\star}(x,s) = \Psi_{2}(x,z,s) \, \int_{D}   \left\vert u_{1} \right\vert^{2}(y) dy  + \mathcal{O}\left( a^{4-2h} \right) + \mathcal{O}\left( \bm{\gamma} \right), 
\end{equation}
where $z \in D$ and
\begin{equation*}
\Psi_{2}(x,z,s) :=  \Im\left( \epsilon_{p} \right) \,   \,  \alpha_{-1}(z,x) \, \left\vert \underset{y}{\nabla} \tau(z,x) \right\vert +  \Im\left( \epsilon_{p} \right) \, \int_{\tau_{2}(x, z)}^{s} 2 \, r \, \sum_{k=0}^{+\infty} \alpha_{k}(z,x)  \,  \frac{\left(r^{2} - \tau^{2}(z,x) \right)^{k}}{k !} \,\, dr.
\end{equation*}
\item Between the entrance and the exit time, i.e. $\tau_{1}(x, z) < s < \tau_{2}(x, z)$, 
\begin{equation*}
p^{\star}(x,s) = \Psi_{1}(x,z^{\star}) \, \int_{D \cap RV_{\tau}(x,s)} \left\vert u_{1} \right\vert^{2}(y) \, dy + \mathcal{O}\left( a^{4-2h} \right)  + \mathcal{O}\left( \bm{\gamma} \right),
\end{equation*}
where $z^{\star} \in D \cap RV_{\tau}(x,s)$ and 
\begin{equation*}
\Psi_{1}(x,z^{\star}) :=  \Im\left( \epsilon_{p} \right) \,   \,  \alpha_{-1}(z^{\star},x) \, \left\vert \underset{y}{\nabla} \tau(z^{\star},x) \right\vert.
\end{equation*}
Moreover, for $\varrho$ positive reel number, the subset $RV_{\tau}(x,\varrho)$ that we call Riemannian volume of center $x \in \mathbb{R}^{3}$ and radius $\varrho$, is defined by:
\begin{equation*}
RV_{\tau}(x,\varrho) := \{y \in \mathbb{R}^{3}, \quad \text{such that} \quad \tau(x,y) \leq \varrho  \}.
\end{equation*}  
\end{enumerate}
\end{theorem}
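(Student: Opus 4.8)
The plan is to start from the representation (\ref{SolutionPressureI}) and exploit the particular double-integral structure defining $p^{\star}$. Writing $f(y) := \Im\left( \varepsilon \right)(y)\,\left\vert u_{1}\right\vert^{2}(y)$ and $Q(x,r) := \int_{\Omega} G(x,r,y)\,f(y)\,dy$, the inner time integral collapses by the fundamental theorem of calculus: since $p = \partial_{t} Q$ and $G(x,0,y)=0$, one has $\int_{0}^{r} p(x,t)\,dt = Q(x,r)$, whence
\begin{equation*}
p^{\star}(x,s) = \int_{0}^{s} 2r\,Q(x,r)\,dr = \int_{\Omega} \left( \int_{0}^{s} 2r\,G(x,r,y)\,dr \right) f(y)\,dy .
\end{equation*}
I would then split $\int_{\Omega} = \int_{\Omega\setminus D} + \int_{D}$ according to (\ref{permittivityfct}). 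On $\Omega\setminus D$ the factor $\Im\left( \epsilon_{0}\right)$ is controlled by (\ref{SELZ}), and combined with the $L^{4}(\Omega)$ bound on $u_{1}$ (H\"older) this piece is $\mathcal{O}\left( \bm{\gamma}\right)$, which is the source of the $\mathcal{O}\left( \bm{\gamma}\right)$ term in all three estimates.

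The heart of the computation is the inner time integral applied to the singularity expansion (\ref{Green'skernelvarspeed}). The weight $2r\,dr$ is tailored to the substitution $u = r^{2} - \tau^{2}(x,y)$: integrating term by term gives $\int_{0}^{s} 2r\,\Theta_{-1}(r^{2}-\tau^{2})\,dr = \mathbf{1}_{\{s>\tau(x,y)\}}$ for $k=-1$, and $\int_{0}^{s} 2r\,\Theta_{k}(r^{2}-\tau^{2})\,dr = \frac{(s^{2}-\tau^{2}(x,y))^{k+1}}{(k+1)!}\,\mathbf{1}_{\{s>\tau(x,y)\}}$ for $k\geq 0$, so that
\begin{equation*}
\int_{0}^{s} 2r\,G(x,r,y)\,dr = \mathbf{1}_{\{s>\tau(x,y)\}}\left[ \alpha_{-1}(x,y) + \sum_{k\geq 0}\alpha_{k}(x,y)\,\frac{(s^{2}-\tau^{2}(x,y))^{k+1}}{(k+1)!}\right].
\end{equation*}
The entire case split is then dictated by the position of $s$ relative to $\tau(x,y)$ as $y$ ranges over $D$. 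Here I would invoke the estimates of \cite{romanov2013integral} on the $\alpha_{k}$ to justify the term-by-term integration and the convergence and remainder control of the series on the geodesic tube joining $x$ to $D$.

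The three regimes now follow from causality. In Case (1), $s<\tau_{1}(x,z)$ forces $\tau(x,y)\geq \inf_{y\in D}\tau(x,y) = \tau_{1}(x,z)+a > s$ for every $y\in D$, so every indicator vanishes and only the $\mathcal{O}\left( \bm{\gamma}\right)$ remainder survives. In Case (2), $s>\tau_{2}(x,z)$ makes all indicators equal to $1$ on $D$; I would then use the smallness of $D = z + aB$ to Taylor-expand $\alpha_{k}(x,y)$ and $\tau(x,y)$ about the centre $z$, pull the slowly-varying coefficients out of $\int_{D}\left\vert u_{1}\right\vert^{2}$, and reassemble the series into the integral form of $\Psi_{2}$ by reversing the $u$-substitution (the discrepancy between the resulting lower limit $\tau(z,x)$ and the stated $\tau_{2}(x,z)$ being an interval of length $\mathcal{O}(a)$, absorbed into the error; the leading geometric coefficient, carrying the eikonal factor $\left\vert \underset{y}{\nabla}\tau(z,x)\right\vert = c^{-1}(z)$, is read off from this expansion). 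In Case (3) the indicator instead truncates the domain to $D\cap RV_{\tau}(x,s)$; on the freshly-reached part of the particle $s^{2}-\tau^{2}(x,y)$ is of size $\mathcal{O}(a)$, so the $k\geq 0$ terms are subleading and only the $\alpha_{-1}$ term contributes to $\Psi_{1}$, with a mean-value point $z^{\star}\in D\cap RV_{\tau}(x,s)$ absorbing the variation of $\alpha_{-1}$.

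The main obstacle is the quantitative error analysis yielding the $\mathcal{O}\left( a^{4-2h}\right)$ term. Each replacement of a coefficient by its value at $z$ (or $z^{\star}$) costs one factor $\left\vert y-z\right\vert = \mathcal{O}(a)$ relative to the leading term, whose magnitude is governed by $\int_{D}\left\vert u_{1}\right\vert^{2}\,dy$. Controlling this last integral is the delicate point: near a plasmonic resonance the field $u_{1}$ is strongly enhanced inside $D$, and the exponent $h$ is precisely the one encoding the resonant scaling $\int_{D}\left\vert u_{1}\right\vert^{2}\,dy = \mathcal{O}\left( a^{3-2h}\right)$, so that the first-order Taylor remainder is $\mathcal{O}\left( a\cdot a^{3-2h}\right) = \mathcal{O}\left( a^{4-2h}\right)$. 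Establishing this enhancement rate, uniformly in $s$ and compatibly with the $L^{4}$ regularity of $E$, together with the uniform convergence of the $\alpha_{k}$-series, is where the real work lies; the causality-driven case split and the term-by-term time integration are then essentially bookkeeping.
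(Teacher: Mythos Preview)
Your overall strategy coincides with the paper's: collapse the inner time integral using $G(x,0,y)=0$, insert the progressing-wave expansion (\ref{Green'skernelvarspeed}), split $\Omega=D\cup(\Omega\setminus D)$, and treat the three time regimes by causality together with a Taylor expansion about the centre $z$ and the a~priori bound (\ref{aprioriestimate}). The one tactical difference is that you evaluate $\int_0^s 2r\,G(x,r,y)\,dr$ directly via the substitution $u=r^2-\tau^2$, whereas the paper first turns the $\Theta_{-1}$ term into an integral over the Riemannian sphere $\{\tau(\cdot,x)=r\}$ and then applies the coarea formula (Lemma~\ref{LemmaCoreaFormula}) to pass to the Riemannian ball $RV_\tau(x,s)$.

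Two steps in your sketch are genuine gaps. First, on $\Omega\setminus D$ you appeal to ``the $L^4(\Omega)$ bound on $u_1$'' and H\"older together with (\ref{SELZ}) to conclude $\mathcal{O}(\bm{\gamma})$; but the $L^4$ regularity in Section~\ref{33RegE} is purely qualitative and carries no uniformity in $a$ near resonance. The paper does not argue this way: it uses a comparison lemma (Lemma~\ref{LemmaMaxwell}) to replace $u_1$ by the background field $u_0$ on $\Omega\setminus D$ with error $\mathcal{O}(\bm{\gamma}\,a^{3-h})$, after which the remaining integral is $\mathcal{O}(\bm{\gamma})$ because $u_0$ is uniformly bounded. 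Your shortcut would require an independent $\mathcal{O}(1)$ bound on $\|u_1\|_{L^4(\Omega\setminus D)}$ uniform in $a$, which you have not provided.

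Second, your direct substitution correctly gives $\int_0^s 2r\,\Theta_{-1}(r^2-\tau^2)\,dr=\mathbf{1}_{\{s>\tau(x,y)\}}$, so the leading contribution from $D$ is $\int_{D\cap RV_\tau(x,s)}\alpha_{-1}(x,y)\,\Im(\epsilon_p)\,|u_1|^2\,dy$ \emph{without} any factor $|\nabla_y\tau|$. Yet $\Psi_1$ and $\Psi_2$ in the statement carry $\alpha_{-1}(z,x)\,|\nabla_y\tau(z,x)|$, and your remark that this eikonal factor is ``read off from this expansion'' is not borne out by the computation you actually wrote. In the paper that factor appears precisely through the detour you bypassed: after writing $\int_\Omega f(y)\,\delta(r-\tau(y,x))\,dy$ as a surface integral on $\{\tau=r\}$, the coarea formula inserts the Jacobian $|\nabla_y\tau|$. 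If you keep the direct $r$-integration you will not reproduce the factor $|\nabla_y\tau|$ in $\Psi_1,\Psi_2$ as stated, so either go through the paper's surface-integral/coarea step for the $k=-1$ term, or explain concretely where an additional $|\nabla_y\tau|$ enters your expression.
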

\bigskip
The next theorem provides a useful a-priori estimate and a precise dominating term of the electric field solution of the electromagnetic model in terms of the parameter $a$. To state these properties, we assume also here that $\epsilon_0(\cdot)$ to be of class $\mathcal{C}^1$. Let $z\in \Omega$ and define
\begin{equation*}
f_n(\omega, z):=\epsilon_0(z) - (\epsilon_0(z)-\epsilon_p(\omega)) \, \lambda_n
\end{equation*}
where $(\lambda_n)_{n\in \mathbb{N}}$ is the sequence of the eigenvalues of the Magnetization operator $\nabla M(\cdot)$ restricted to $\nabla \mathcal{H}armonic(D):=\{u=\nabla \phi,~~ \Delta \phi =0 \mbox{ in } D\}$. We recall that $\nabla M(\cdot)$ is  defined, from $\mathbb{L}^{p}\left( D \right)$ to $\mathbb{L}^{p}\left( D \right)$, where $1 \leq p \leq \infty$, by: 
\begin{equation*}
\nabla M(V)(x) := \underset{x}{\nabla} \int_{D} \underset{y}{\nabla} \Phi_{0}(x,y) \cdot V(y) \,\, dy := \underset{x}{\nabla} \int_{D} \underset{y}{\nabla} \left( \frac{1}{\left\vert x - y \right\vert} \right) \cdot V(y) \,\, dy, \,\, x \in D. 
\end{equation*}
We show that the dispersion equation $f_n(\omega, z)=0$ has one and only one solution in the complex plan, with a dominant part in the interval $\left( \omega_{0};~~\sqrt{ \omega^{2}_{p} + \omega^{2}_{0}} =: \omega_{max} \right) $. For any $n_0$ fixed, we set $\omega_{n_0}$ to be the corresponding solution for $n=n_0$. 
\begin{theorem}\label{Lemma-E} We assume that $\Omega$ and $D$ to be of class $C^2$ and $\epsilon_0$ of class $C^1$. 
Let the used incident frequency $\omega$ be such that 
\begin{equation*}
\omega^2-\omega^2_{n_{0}} \sim a^h, ~~ h\in [0, 1). 
\end{equation*}
The electric field $u_{1}(\cdot)$ satisfies the following a priori estimate 
\begin{equation}\label{aprioriestimate}
\left\Vert u_{1} \right\Vert_{\mathbb{L}^{2}(D)} \leq a^{-h} \,\, \left\Vert u_{0} \right\Vert_{\mathbb{L}^{2}(D)},
\end{equation} 
and the following approximation
\begin{equation}\label{NearRes}
\int_{D} \left\vert u_{1} \right\vert^{2}(x) \,\, dx = \frac{a^{3} \, \left\vert \epsilon_{0}(z) \right\vert^{2} \, \left\vert \langle u_{0}(z) ; \int_{B} e_{n_{0}}(x) \, dx \rangle  \right\vert^{2}}{\left\vert \epsilon_{0}(z) - (\epsilon_{0}(z) - \epsilon_{p}(\omega)) \,\, \lambda_{n_0} \right\vert^{2}} + \mathcal{O}\left(a^{\min(3;4-3h)} \right).
\end{equation}
\end{theorem}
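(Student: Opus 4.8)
The plan is to recast the electromagnetic scattering problem \eqref{eq:electromagnetic_scattering} as a volume integral equation over the nanoparticle $D$, to diagonalize its leading part through the spectrum of the Magnetization operator $\nabla M$, and then to read off both \eqref{aprioriestimate} and \eqref{NearRes} from the single nearly singular resonant mode. First I would subtract the background field: writing $\varepsilon=\epsilon_0+(\epsilon_p-\epsilon_0)\chi_D$ inside $\Omega$ and letting $u_0$ be the field produced when $\epsilon_0$ replaces the particle (the problem without $D$), the difference $u_1-u_0$ solves a Maxwell equation sourced only on $D$. Convolving with the dyadic Green's function $\mathbb{G}$ of the background operator $\mathrm{Curl}\circ\mathrm{Curl}-\omega^2\epsilon_0\mu$ gives
\[
u_1(x)=u_0(x)+\omega^2\mu\,(\epsilon_p-\epsilon_0)\int_D \mathbb{G}(x,y)\,u_1(y)\,dy,\qquad x\in D .
\]
The structural input is the splitting of $\mathbb{G}$ into its strongly singular gradient--gradient part, whose kernel is exactly that of $\nabla M$, and a remainder with only a weak $|x-y|^{-1}$ singularity; the regularity $u_1\in\mathbb{L}^4(D)$ from Section \ref{33RegE} makes these integrals meaningful and controls the remainder.

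Next I would rescale to the reference ball by $x=z+a\xi$, $\hat u_1(\xi):=u_1(z+a\xi)$. Under this change $\nabla M$ is scale invariant, the remainder of $\mathbb{G}$ becomes an operator of norm $\mathcal O(a)$, and the $\mathcal C^1$ smoothness of $\epsilon_0$ lets me freeze $\epsilon_0(z+a\xi)=\epsilon_0(z)+\mathcal O(a)$, so that on $\mathbb{L}^2(B)$
\[
\epsilon_0(z)\,\hat u_1-\bigl(\epsilon_0(z)-\epsilon_p(\omega)\bigr)\,\nabla M[\hat u_1]=\epsilon_0(z)\,u_0+\mathcal O(a).
\]
Projecting onto the eigenbasis $(e_n)$ of $\nabla M$ on $\nabla\mathcal{H}armonic(B)$ — a decomposition whose compactness/selfadjointness relies on the $C^2$ regularity of $\partial D$ — the $n$-th mode carries the multiplier $f_n(\omega,z)=\epsilon_0(z)-(\epsilon_0(z)-\epsilon_p(\omega))\lambda_n$, while the complementary (non-gradient) part is inverted by a uniformly bounded operator. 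From $f_{n_0}(\omega_{n_0},z)=0$, the Lorentz model \eqref{plasmonic} and the detuning hypothesis $\omega^2-\omega^2_{n_0}\sim a^h$, a first-order expansion yields $|f_{n_0}(\omega,z)|\sim a^h$, the small damping $\gamma_p$ contributing an imaginary part of at most the same order. Hence the $n_0$-mode of $\hat u_1$ is of size $a^{-h}\,|\langle u_0,e_{n_0}\rangle|$ and all other modes stay $\mathcal O(\|u_0\|)$; summing the Parseval series and undoing the scaling (both sides pick up the same factor $a^{3/2}$) gives \eqref{aprioriestimate}.

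For the asymptotics \eqref{NearRes} I would retain only the resonant mode. Freezing $u_0(z+a\xi)=u_0(z)$ turns $\langle u_0,e_{n_0}\rangle$ into $\langle u_0(z);\int_B e_{n_0}\rangle$ up to a relative $\mathcal O(a)$, so the resonant coefficient is $\epsilon_0(z)\langle u_0(z);\int_B e_{n_0}\rangle/f_{n_0}(\omega,z)$ plus lower order. Using Parseval with $\|e_{n_0}\|_{\mathbb{L}^2(B)}=1$ and the orthogonality of $e_{n_0}$ to the remaining modes (which annihilates the cross terms),
\[
\int_D|u_1|^2\,dx=a^3\int_B|\hat u_1|^2\,d\xi=\frac{a^3\,|\epsilon_0(z)|^2\,\bigl|\langle u_0(z);\int_B e_{n_0}(x)\,dx\rangle\bigr|^2}{|f_{n_0}(\omega,z)|^2}+(\text{error}).
\]
The error is the worse of two pieces: the non-resonant modes contribute $\mathcal O(a^3)$, while the $\mathcal O(a)$ Green's-function remainder enters the resonant field at order $a^{1-2h}$ (one resonance amplification), whose cross term with the leading $a^{-h}$ mode is $\mathcal O(a^{1-3h})$, i.e. $\mathcal O(a^{4-3h})$ after the volume factor $a^3$; together these give $\mathcal O(a^{\min(3,4-3h)})$ as stated. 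Since $\min(3,4-3h)>3-2h$ for $h\in[0,1)$, this is genuinely below the leading $a^{3-2h}$ term.

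The \emph{main obstacle} is the resonance quantification of the second step together with the bookkeeping of the third: establishing the uniform lower bound $|f_{n_0}|\gtrsim a^h$, controlling the interplay of detuning and the damping $\gamma_p$, and, above all, tracking how the $\mathcal O(a)$ remainder of the dyadic Green's function is magnified by the resonance so as to produce exactly the $a^{4-3h}$ contribution and nothing larger. This control of the amplified remainder — not the leading extraction, which is routine once the operator is diagonalized — is where the delicate estimates lie.
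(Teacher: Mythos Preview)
The paper does not prove this statement here; it simply cites Theorem~1.1 and Proposition~2.2 of the companion work \cite{AhceneMouradMaxwell}. Your outline --- Lippmann--Schwinger reformulation on $D$, splitting of the dyadic Green kernel into the scale-invariant $\nabla M$ part plus an $\mathcal O(a)$ remainder (cf.\ \eqref{KernelDecomposition} and Section~\ref{33RegE} of the present paper), diagonalization on the eigenbasis of $\nabla M|_{\nabla\mathcal{H}armonic}$, and extraction of the single nearly singular mode with the error bookkeeping you describe --- is exactly the strategy of that companion paper, and your accounting for the $a^{4-3h}$ cross-term is correct.
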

\begin{proof}
For the proof, we refer the readers to Theorem 1.1 and Proposition 2.2 in \cite{AhceneMouradMaxwell}.
\end{proof}

Theorem \ref{THMVarSpeedCvar}, coupled with Theorem \ref{Lemma-E}, suggests the following corollary.  
\begin{corollary}\label{coro}
Let the conditions in Theorem \ref{THMVarSpeedCvar}  and Theorem \ref{Lemma-E} be satisfied. After the exit time, i.e. $s > \tau_{2}(x, z)$, and if
\begin{equation*}
\omega^2-\omega^2_{n_{0}} \sim a^h, ~~ h\in [0, 1) 
\end{equation*}
we have
\begin{equation}\label{1Coro}
p^{\star}(x,s,\omega) =  \Psi_{2}(x,z,s) \, \frac{a^{3} \, \left\vert \epsilon_{0}(z) \right\vert^{2} \, \left\vert \langle u_{0}(z) ; \int_{B} e_{n_{0}}(x) \, dx \rangle \right\vert^{2}}{\left\vert \epsilon_{0}(z) - \left(\epsilon_{0}(z) - \epsilon_{p}(\omega) \right) \,\lambda_{n_0} \right\vert^{2}} + \mathcal{O}\left( a^{\min(3,4-3h)} \right) + \mathcal{O}\left( \bm{\gamma} \right).
\end{equation}
with
\begin{equation*}
\langle u_{0}(z) ; \int_{B} e_{n_{0}}(x) \, dx \rangle = \sum_{m} \langle u_{0}(z) ; \int_{B} e_{n_{0},m}(x) \, dx \rangle,
\end{equation*}
where $e_{n_{0},m}(\cdot)$ are such that $\nabla M \left(e_{n_{0},m} \right) = \lambda_{n_0} \,\, e_{n_{0},m}$.

\end{corollary}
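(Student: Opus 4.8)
The plan is to read off \eqref{1Coro} by inserting the field approximation \eqref{NearRes} of Theorem~\ref{Lemma-E} into the post--exit--time pressure formula \eqref{I2!} of Theorem~\ref{THMVarSpeedCvar}, and then reorganizing the error terms. Since the hypotheses of the corollary are exactly those of both theorems, supplemented by the resonance condition $\omega^2-\omega^2_{n_0}\sim a^h$ with $h\in[0,1)$, both representations are simultaneously available in the regime $s>\tau_2(x,z)$, so no extra analysis is needed to activate them.

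First I would recall that \eqref{I2!} gives, for $s>\tau_2(x,z)$, the identity $p^\star(x,s)=\Psi_2(x,z,s)\int_D|u_1|^2(y)\,dy+\mathcal{O}(a^{4-2h})+\mathcal{O}(\bm{\gamma})$, where the coefficient $\Psi_2(x,z,s)$ is built solely from the Green's-function data $\alpha_k(z,x)$, $\tau(z,x)$ and from $\Im(\epsilon_p)$. Its only $a$-dependence enters through $\Im(\epsilon_p(\omega))$, which stays bounded near the plasmonic resonance $\omega_{n_0}\in(\omega_0,\omega_{max})$ because there $\omega_0^2-\omega^2$ is bounded away from zero; hence $\Psi_2(x,z,s)$ is bounded with respect to $a$. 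Substituting \eqref{NearRes} then produces the announced leading term $\Psi_2(x,z,s)\,a^3|\epsilon_0(z)|^2\,|\langle u_0(z);\int_B e_{n_0}(x)\,dx\rangle|^2/|f_{n_0}(\omega,z)|^2$, recalling that $f_{n_0}(\omega,z)=\epsilon_0(z)-(\epsilon_0(z)-\epsilon_p(\omega))\lambda_{n_0}$ is exactly the quantity whose modulus squared appears in the denominator, together with the residual $\Psi_2(x,z,s)\,\mathcal{O}(a^{\min(3,4-3h)})$.

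Second, the only genuine bookkeeping is to absorb every error contribution into $\mathcal{O}(a^{\min(3,4-3h)})+\mathcal{O}(\bm{\gamma})$. Because $\Psi_2$ is bounded in $a$, the residual $\Psi_2\,\mathcal{O}(a^{\min(3,4-3h)})$ is again $\mathcal{O}(a^{\min(3,4-3h)})$, and the $\mathcal{O}(\bm{\gamma})$ term is carried over unchanged. It then remains to compare the exponent $4-2h$ coming from \eqref{I2!} with $\min(3,4-3h)$: for $h\in[0,\tfrac13)$ one has $4-2h>3=\min(3,4-3h)$, while for $h\in[\tfrac13,1)$ one has $4-2h>4-3h=\min(3,4-3h)$, so in both regimes $a^{4-2h}$ is of strictly higher order in $a$ and thus $\mathcal{O}(a^{4-2h})\subset\mathcal{O}(a^{\min(3,4-3h)})$. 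The two error orders therefore merge into the single term displayed in \eqref{1Coro}.

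Finally I would record the identity $\langle u_0(z);\int_B e_{n_0}(x)\,dx\rangle=\sum_m\langle u_0(z);\int_B e_{n_0,m}(x)\,dx\rangle$ as nothing more than the expansion of $e_{n_0}$ over an orthonormal basis $\{e_{n_0,m}\}$ of the (possibly multi-dimensional) eigenspace of $\nabla M$ attached to the eigenvalue $\lambda_{n_0}$, which accounts for the degeneracy of $\lambda_{n_0}$. I do not anticipate a serious obstacle, since the statement is flagged as a corollary precisely because the substitution is algebraically immediate; the one point requiring genuine care is the exponent comparison $4-2h$ versus $\min(3,4-3h)$, which must be checked separately on the two sub-intervals of $h$ to guarantee that the error introduced through \eqref{NearRes} does not degrade the order $a^{\min(3,4-3h)}$ already built into the leading coefficient, the boundedness of $\Psi_2$ in $a$ being the only analytic input beyond the two quoted theorems.
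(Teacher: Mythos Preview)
Your proposal is correct and follows exactly the same approach as the paper, which simply states that the corollary is a straightforward consequence of \eqref{I2!} and the field approximation \eqref{NearRes}. Your additional bookkeeping on the boundedness of $\Psi_2$ and the exponent comparison $4-2h\ge\min(3,4-3h)$ for $h\in[0,1)$ makes explicit what the paper leaves implicit, but the argument is otherwise identical.
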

\begin{proof}
The previous corollary is a straightforward consequence of $(\ref{I2!})$ and the approximation of the electric field given by $(\ref{NearRes})$. \end{proof}

Observe that when $h=0$, i.e. the incident frequency $\omega$ is away from the resonance $\omega_{n_0}$, then 
\begin{equation*}
p^{\star}(x,s,\omega) = \mathcal{O}\left( a^{3} \right) + \mathcal{O}\left( \bm{\gamma} \right).
\end{equation*}

\subsection{Simultaneous reconstruction of $\rho$, $c$ and $\epsilon_{0}$}\label{Section-Reconstruction}
Based on Theorem \ref{THMVarSpeedCvar} and Corollary \ref{coro}, we propose a scheme to reconstruct $\rho$, $c$ and $\epsilon_{0}$ from the measurements of $p(x, s, \omega)$ for
\begin{enumerate}
\item a fixed point $x$ on the boundary of $\Omega$,
\item [] 
\item a large enough interval of time $s$, i.e. $s \in \left( 0,\;~ M^{-1}\; Diam(\Omega) \right)$, recalling that $M^{-1}< \underset{x \in \Omega}{\inf} c(x)$, see (\ref{low-upper-condition-c}), and $Diam(\Omega)$ stands for the diameter of $\Omega$, 
\item[]
\item and a large enough band of incident frequencies $\omega$, i.e. $\omega \in \left( \omega_{min}:=\omega_0,\;~ \omega_{max} :=\sqrt{ \omega^{2}_{p} + \omega^{2}_{0}} \right)$.
\end{enumerate}
\bigskip


We proceed as follows.
\bigskip

\begin{enumerate}
\item We look at the variation of the collected pressure $p^{\star}(\cdot, \cdot)$, as a function only with respect to time variable, i.e. regardless on the used frequencies, we see that $p^{\star}(\cdot, \cdot)$ before the entrance time \,\, $\tau_{1}(x, z) := \left(\underset{y \in D}{Inf} \,\, \tau(x,y) \right) - a$, \,\, to the particle $D$, is small of order $\mathcal{O}\left( \bm{\gamma} \right)$. Furthermore, after the exit time \,\, $\tau_{2}(x, z) := \left(\underset{y \in D}{Sup} \,\, \tau(x,y) \right) + a$, from the particle $D$, we observe two scenarios, see for instance Corollary \ref{coro} and the schematic representation for the average pressure given by Figure \ref{redgreen}. We observe that the average pressure $p^{\star}(\cdot, \cdot)$ is large compared to $\bm{\gamma}$. The observed jump, of course in continuous manner\footnote{The continuity of $p^{\star}(\cdot, \cdot)$, with respect to time variable, is a consequence of $(\ref{regP})$ and $(\ref{I1!})$.}, in the interval $]\tau_{1}(x, z);\tau_{2}(x, z)[$ allows us to reconstruct the travel time $\tau_{1}(x, z)$ and/or\footnote{As the size of the particle $D$ is of order \, $a$ \, and the function $\tau(x,\cdot)$ is smooth one, we cannot distinguish between $\tau_{1}(x, z)$ and $\tau_{2}(x, z)$, i.e. $\left\vert \tau_{1}(x, z) - \tau_{2}(x, z) \right\vert = \mathcal{O}\left( a \right)$.} $\tau_{2}(x, z)$. By moving the nano-particle $D$ inside $\Omega$, we reconstruct for fixed $x \in \partial \Omega$ the travel time function $\tau(x,\cdot)$. After the reconstruction of the travel time function, we use the eikonal equation,
see \cite[Chapter 3]{romanovBook}, given by:
\begin{equation*}
\left\vert \underset{z}{\nabla} \tau(
x,z) \right\vert^{-1} \, = \, c(z), \quad z \in D,
\end{equation*}
to reconstruct the speed of propagation, namely the function $c(\cdot)$ on the location point $z$ of the nanoparticle $D$. By moving $D$ inside omega we reconstruct the function $c(\cdot)$ inside the domain $\Omega$.  
\item[]
\item We have seen that from the behaviour of the map $s\rightarrow p^{\star}(x,s,\omega)$, for $x \in \partial \Omega$ and $\omega$ fixed, we can reconstruct the internal values of the travel time, i.e. $\tau(x, z)$ for $z\in \Omega$, and hence the acoustic speed $c(z)$ for $ z\in\Omega$. Now, we use the map $\omega \rightarrow p^{\star}(x,s,\omega)$, for $x \in \partial \Omega$ and $s> \tau_2(x, z)$ fixed, to reconstruct the permittivity function $\epsilon_0$. Indeed, based on the expansion $(\ref{1Coro})$, we see that this map will reach its maximum at the zeros of $\vert \epsilon_{0}(z) - \left(\epsilon_{0}(z) - \epsilon_{p}(\omega) \right) \,\lambda_{n_0}\vert$. We recall that for every $z\in \Omega$ and for every $n \in \mathbb{N}$, the dispersion equation 
\begin{equation*}
f_n(\omega, z):=\epsilon_0(z) - (\epsilon_0(z)-\epsilon_{p}(\omega))\lambda_n=0,
\end{equation*}
has one and only one solution in the complex plan, that we denote by $\omega_{n, \mathbb{C}}$, given by:
\begin{equation*}
\omega_{n, \mathbb{C}} = \frac{i \, \gamma_{p} \mp \sqrt{- \gamma_{p}^{2} + 4 \, \left( \omega^{2}_{0} + \dfrac{\epsilon_{\infty} \, \lambda_{n} \, \omega_{p}^{2}}{\epsilon_{\infty} \, \lambda_{n} + (1 - \lambda_{n}) \, \epsilon_{0}(z)} \right)}}{2},
\end{equation*}
with it's dominant part contained in the interval $\left( \omega_{0};~~\sqrt{ \omega^{2}_{p} + \omega^{2}_{0}} \right)$. In addition, we show that with
\begin{equation*}
\bm{\omega_{n_{0}}} :=  \left( \omega_{0}^{2}  + \dfrac{\omega_{p}^{2} \, \lambda_{n_{0}} \, \epsilon_{\infty}}{\lambda_{n_{0}} \, \epsilon_{\infty} + (1 - \lambda_{n_{0}}) \, \Re(\epsilon_{0}(z)) } \right)^{\frac{1}{2}},
\end{equation*}
we have, see Section \ref{Justification-omega-n-0},
\begin{eqnarray*}
 f_{n_{0}}\left(\bm{\omega_{n_{0}}}, z \right) = \mathcal{O}(\bm{\gamma})  + \mathcal{O}(\gamma_{p}).
\end{eqnarray*}
 
Consequently, from  $(\ref{1Coro})$, we deduce that $p^{\star}(x,s,\cdot)$ admits a peaks near $\bm{\omega_{n_{0}}}$.
Therefore by plotting the curve $\omega \rightarrow p^{\star}(x,s,\omega)$, for $x \in \partial \Omega$ and $s> \tau_2(x, z)$ fixed, in the interval $\left( \omega_{0};~~\omega_{max} \right)$, see Figure \ref{ReImp}, we can estimate $\bm{\omega_{n_0}}$ and hence reconstruct $\epsilon_0(z)$ as
\begin{equation*}
\epsilon_0(z) = \frac{\lambda_{n_{0}}}{\left(\lambda_{n_{0}} - 1 \right)} \, \epsilon_{p}(\bm{\omega_{n_0}}),
\end{equation*}
which is an approximate solution to the exact one defined, i.e. constructed, by  $\dfrac{\lambda_{n_{0}}}{\left(\lambda_{n_{0}} - 1 \right)} \, \epsilon_{p}(\omega_{n_0,\mathbb{C}})$.

\begin{figure}[H]
     \centering
     \begin{subfigure}[H]{0.45\textwidth}
         \centering
         \includegraphics[width=\textwidth]{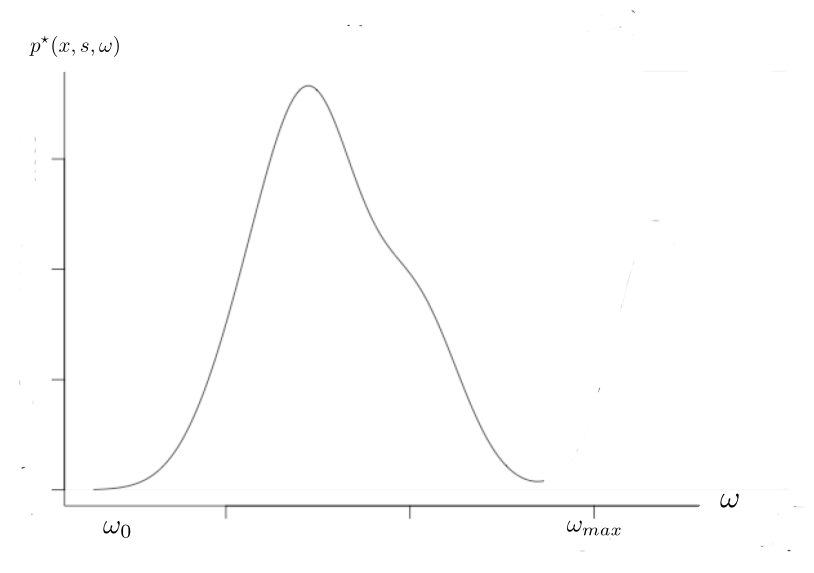} 
         \caption{A schematic representation of the function $\omega \rightarrow p^{\star}(\omega,x,s).$ The peak is reached for $\omega$ near $\bm{\omega_{n_{0}}}$.}
         \label{ReImp}
     \end{subfigure}
     \hfill
     \begin{subfigure}[H]{0.5\textwidth}
         \centering
\includegraphics[width=\textwidth]{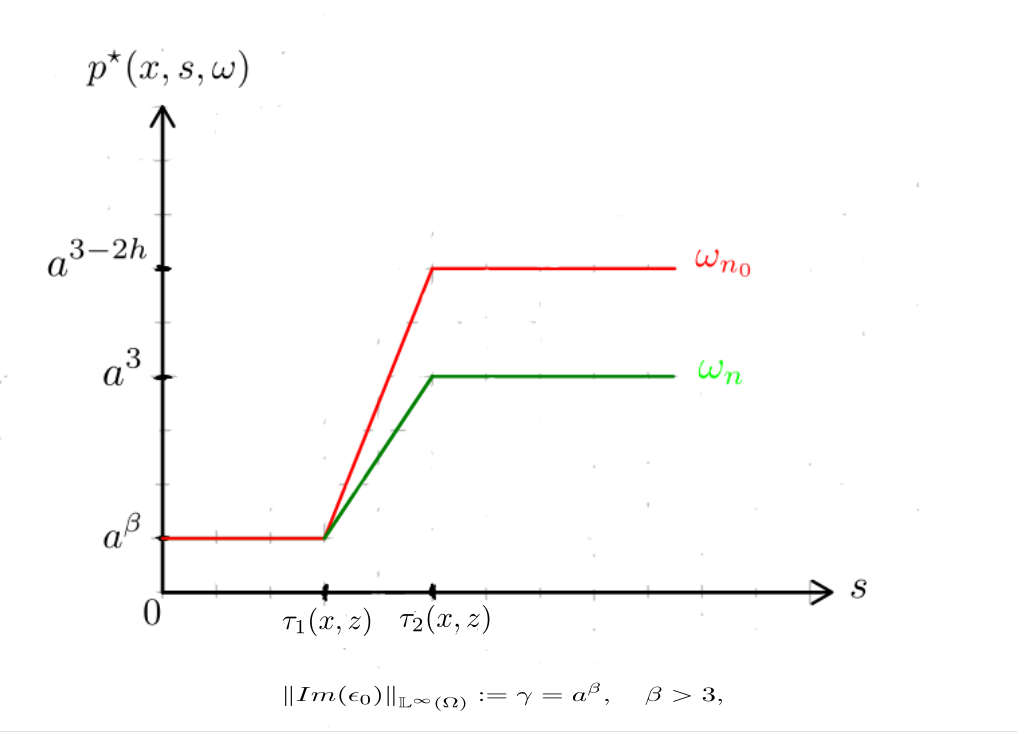}          
         \caption{Schematic representation for the average pressure $s \rightarrow p^{\star}(\omega,x,s)$. The case when we are away from the resonance is marked with green color. In this case we need $\beta>3$. The case when we are close to one resonance is marked with red color. In this case we can take $\beta > 3-h$.}
         \label{redgreen}
     \end{subfigure}
\end{figure}

\bigskip
\item Our next goal is to reconstruct the function $\rho$. To achieve this reconstruction, we see in Theorem \ref{THMVarSpeedCvar} that the measured pressure after  
the exit time, i.e. $ s > \tau_{2}(x, z) $, is given by the following expression
\begin{eqnarray*}
\int_{0}^{s} 2 r \int_{0}^{r} p(x,t)  dt \, dr &=&  \Im\left( \epsilon_{p} \right) \,   \,  \alpha_{-1}(z,x) \, \left\vert \underset{y}{\nabla} \tau(z,x) \right\vert \, \int_{D}   \left\vert u_{1} \right\vert^{2}(y) dy \\ &+&  \Im\left( \epsilon_{p} \right) \, \int_{\tau_{2}(x, z)}^{s} 2 \, r \, \sum_{k=0}^{+\infty} \alpha_{k}(z,x)  \,  \frac{\left(r^{2} - \tau^{2}(z,x) \right)^{k}}{k !} \,\, dr \, \int_{D}   \left\vert u_{1} \right\vert^{2}(y) dy  + Remainder_{1}.
\end{eqnarray*}
Now, in particular, when $s$ is close\footnote{We can prove that: 
\begin{equation*}
\left\vert \Im\left( \epsilon_{p} \right) \, \int_{\tau_{2}(x, z)}^{s} 2 \, r \, \sum_{k=0}^{+\infty} \alpha_{k}(z,x)  \,  \frac{\left(r^{2} - \tau^{2}(z,x) \right)^{k}}{k !} \,\, dr \,   \right\vert \lesssim \left\vert s - \tau_{2}(x, z) \right\vert.
\end{equation*}} to $\tau_{2}(x, z)$ the previous expression is reduced to 
\begin{equation*}
\int_{0}^{s} 2 r \int_{0}^{r} p(x,t)  dt \, dr = \Im\left( \epsilon_{p} \right) \,   \,  \alpha_{-1}(z,x) \, \left\vert \underset{y}{\nabla} \tau(z,x) \right\vert \, \int_{D}   \left\vert u_{1} \right\vert^{2}(y) dy   + Remainder_{2}.
\end{equation*}

Therefore, we can reconstruct $\Im\left( \epsilon_{p} \right) \,  \alpha_{-1}(z,x) \, \left\vert \nabla \tau(z,x) \right\vert \int_{D}   \left\vert u_{1} \right\vert^{2}(y) dy$. However, as we already reconstructed $\epsilon_{0}(\cdot)$ inside $\Omega$, then we can reconstruct $\int_{D}   \left\vert u_{1} \right\vert^{2}(y) dy$. As $\Im\left( \epsilon_{p} \right), \, \left\vert \nabla \tau(z,x) \right\vert$ and $\int_{D}   \left\vert u_{1} \right\vert^{2}(y) dy$ are known terms, we conclude the reconstruction of the function $\alpha_{-1}(z, x)$ and then by moving the plasmonic nano-particle inside $\Omega$, we reconstruct the function $\alpha_{-1}(\cdot, x)$ inside $\Omega$.

\bigskip

We see from $(\ref{defalpha-1})$, that the reconstruction of the functions $\tau(\cdot,\cdot)$ and $\alpha_{-1}(\cdot,\cdot)$ is sufficient, using numerical differentiation, to reconstruct the function $g_{x}(\cdot)$ defined, from $\overline{\Omega}$ to $\mathbb{R}$, by 
\begin{equation}\label{IGP}
g_{x}(y) := \int_{\Gamma(x,y)} \langle \underset{\xi}{\nabla} \log(\rho(\xi)) ; d\xi \rangle = \int_{\Gamma(x,y)}  \underset{\xi}{\bm{d}} \log(\rho(\xi)),  
\end{equation}
where $x \in \partial \Omega$ is fixed and, for every $y \in \overline{\Omega}$, the geodesic $\Gamma(x,y)$ is known, i.e constructed. The reconstruction of $\rho(\cdot)$ from $g_{x}(\cdot)$, in $(\ref{IGP})$, is an inverse integral problem along geodesics, which consists in determining a function by the given integrals of this function over a family of manifolds, see for instance \cite{amirov2014integral,
anikonov1997uniqueness, lavrent1967certain,romanov2013integral}. In our case, because $\underset{\xi}{\bm{d}} \log(\rho(\xi))$ is an exact one differential form, and most importantly the arriving points $y$ are (arbitrarily)  inside $\Omega$, the reconstruction of $\rho(\cdot)$ from the known function $g_{x}(\cdot)$ is quit easier and to accomplish this we proceed by a  
straightforward computation.
Let $\phi_{y}(\cdot)$ be $\mathcal{C}^{1}$-curve parametrizing the geodesic $\Gamma(\cdot,\cdot)$, i.e. 
\begin{eqnarray*}
\phi_{y}(\cdot) : [a,b_{y}] & \longrightarrow & \qquad \quad \,\, \overline{\Omega} \subset \mathbb{R}^{3} \\
s \quad & \longrightarrow & \left(\phi_{1,y}(s);\phi_{2,y}(s);\phi_{3,y}(s) \right), 
\end{eqnarray*} 
where  $a,b_{y} \in \mathbb{R}$ and
\begin{equation}\label{Gamma=phiab}
\phi_{y}(a) := x, \,\, \phi_{y}(b_{y}) := y \,\, \text{and} \,\,  \phi_{y}([a,b_{y}]) = \Gamma(x,y).
\end{equation}
With this parametrization, we obtain 
\begin{equation}\label{gxlogrho}
g_{x}(y) = \int_{\Gamma(x,y)}  \underset{\xi}{\bm{d}} \log(\rho(\xi)) \overset{(\star)}{=} \log(\rho(\phi_{y}(b_{y}))) - \log(\rho(\phi_{y}(a))) \overset{(\ref{Gamma=phiab})}{=} \log(\rho(y)) - \log(\rho(x)).
\end{equation}
For the justification of $(\star)$, in the previous formula, we refer the readers to  \cite[Proposition 3.4.1]{carton2007cours}. As the function $g_{x}(\cdot)$ is known we deduce from $(\ref{gxlogrho})$ the reconstruction of the function $\log(\rho(\cdot))$ up to an additive constant given by $-\log(\rho(x))$, where we recall that $x \in \partial \Omega$ is fixed. Consequently, we reconstruct the function $\rho(\cdot)$ up to a multiplicative constant given by $\dfrac{1}{\rho(x)}$. Furthermore, as we assumed that $\rho$ is smooth in $\mathbb{R}^3$ and we know $\rho$ outside $\Omega$ (to be constant), therefore $\rho(x)$, for $x\in \partial \Omega$ should be known (the known constant outside $\Omega$).

\end{enumerate}
\bigskip

 As we have reconstructed $\rho$ and $c$, then, in principle, we can also reconstruct the full  Green's kernel $G(\cdot,\cdot)$ by solving the forward problem for the wave equation. Actually, we can reconstruct $G(x,t, y)$, $x\in \partial \Omega$, $t>0$ and $y\in \Omega$,  directly from our measured data. To do this, we start by recalling its series expansion, see $(\ref{Green'skernelvarspeed})$,
\begin{equation*}
G(x,t,y) =  \sum_{k = -1}^{+\infty} \alpha_{k}(x,y) \,\, \Theta_{k}\left(t^{2}-\tau^{2}(x,y)\right), \quad x \neq y, \quad t \geq 0.
\end{equation*}
Since we have already constructed both the function  $\alpha_{-1}(\cdot,\cdot)$ and the travelling time $\tau(\cdot,\cdot)$, it is clear that to reconstruct $G(\cdot,\cdot)$ we need only to reconstruct the functions $\alpha_{k}(\cdot,\cdot), \; k \geq 0$.  From the definition of $\alpha_{0}(\cdot,\cdot)$, see $(\ref{defalphak})$, and taking into account that all the involved parameters $c(\cdot), \Gamma(\cdot,\cdot), \tau(\cdot,\cdot), \rho(\cdot)$ and $\alpha_{-1}(\cdot,\cdot)$ are already constructed, we can reconstruct $\alpha_{0}(\cdot,\cdot)$. Finally, by an induction process we reconstruct all the functions $\alpha_{k}(\cdot,\cdot)$, see (\ref{defalphak}).
\bigskip

In the previous literature dealing with the photo-acoustic imaging, we find uniqueness results and stability estimates of reconstructing  the variable speed $c$ or/and the initial source in \cite{S-U:2009, S-U-2013, S-Y-2017} under certain conditions. There are also numerical schemes proposed in \cite{Kirsch-Scherzer, N-S:2014} to reconstruct the speed/source assuming the speed to be piece-wise constant or enough small (under the Born approximation regime). In the current work, we proposed, using measurements generated by injected resonating nano-particles, a reconstruction scheme with which we can recover both the optical and acoustical properties of the object to image, i.e. permittivity, sound speed and mass density. The analysis is made under quite general conditions on these unknowns apart from the conditions in Hypotheses \ref{Hyp}. These last conditions appear naturally when dealing with inverse problems in integral geometry \cite{romanovBook, romanov2009smoothness, romanov2013integral}.

\bigskip

The remaining part of the manuscript is divided as follows. In Section 2, we give the proof of Theorem \ref{THMVarSpeedCvar}. In Section 3, we prove the existence and uniqueness of $p(\cdot,\cdot)$, solution of $(\ref{pressurwaveequa})$ with an $L^2$-source term, justify the integral representation  $(\ref{SolutionPressure})$ and the $L^4$-regularity of the source term $\dfrac{\omega \, \beta_{0}}{c_{p}} \; \Im(\varepsilon) \; \vert E \vert^{2}$ appearing in $(\ref{pressurwaveequa})$. Finally, we justify the point (2) of Section \ref{Section-Reconstruction} in Section \ref{Justification-omega-n-0}.
\bigskip

\section{Proof of Theorem \ref{THMVarSpeedCvar}}

We start by integrating $(\ref{SolutionPressureI})$, with respect to $t$ from $0$ to $r$, to obtain   
\begin{equation*}
\int_{0}^{r} p(x,t) \, dt = \int_{\Omega} G(x,r,y) \,\, \Im\left( \varepsilon \right)(y) \, \left\vert u_{1} \right\vert^{2}(y)  \,  dy - \int_{\Omega} G(x,0,y) \, \Im\left( \varepsilon \right)(y) \, \left\vert u_{1} \right\vert^{2}(y)  \,  dy.
\end{equation*}
For the second integral on the right hand side, from $(\ref{Green'skernelvarspeed})$, we have
\begin{equation*}
G(y,0,x)  =  \sum_{k = -1}^{+\infty} \alpha_{k}(y,x) \,\, \Theta_{k}\left(-\tau^{2}(y,x)\right) = 0, 
\end{equation*}
then 
\begin{eqnarray}\label{KNS1}
\nonumber
\int_{0}^{r} p(x,t) \, dt &=& \int_{\Omega} G(x,r,y) \,\, \Im\left( \varepsilon \right)(y) \, \left\vert u_{1} \right\vert^{2}(y)  \,  dy \\ \nonumber &\overset{(\ref{Green'skernelvarspeed})}{=}& \int_{\Omega}  \Im\left( \varepsilon \right)(y) \, \left\vert u_{1} \right\vert^{2}(y)  \,\, \alpha_{-1}(y,x) \,\, \delta\left(r^{2} - \tau^{2}(y,x)\right) \, dy \\
&+& \int_{\Omega} \Im\left( \varepsilon \right)(y) \, \left\vert u_{1} \right\vert^{2}(y)   \,\, \sum_{k = 0}^{+\infty} \alpha_{k}(y,x) \,\, \Theta_{k}\left(r^{2} - \tau^{2}(y,x)\right) \, dy. 
\end{eqnarray}
Now, using the fact that
\begin{equation*}
\delta\left(r^{2} - \tau^{2}(y,x)\right) = \frac{\delta\left(r - \tau(y,x)\right)+\delta\left(r + \tau(y,x)\right)}{2 \, r} = \frac{\delta\left(r - \tau(y,x)\right)}{2 \, r}
\end{equation*}
and the definition of the function $\Theta_{k}(\cdot)$, see $(\ref{defThetak})$, 
\begin{equation*}
\Theta_{k}\left(r^{2} - \tau^{2}(y,x)\right) = \frac{\left(r^{2} - \tau^{2}(y,x)\right)^{k}}{k !} \, \Theta_{0}\left(r^{2} - \tau^{2}(y,x)\right) = \frac{\left(r^{2} - \tau^{2}(y,x)\right)^{k}}{k !} \, \Theta_{0}\left(r - \tau(y,x)\right),
\end{equation*} 
we rewrite the formula $(\ref{KNS1})$ as
\begin{eqnarray}\label{KNS2}
\nonumber
\int_{0}^{r} p(x,t) \, dt &=& \frac{1}{2r} \, \int_{\Omega} \Im\left( \varepsilon \right)(y) \, \left\vert u_{1} \right\vert^{2}(y)   \alpha_{-1}(y,x) \,\, \delta\left(r - \tau(y,x)\right) \, dy \\
&+& \int_{\Omega} \Im\left( \varepsilon \right)(y) \, \left\vert u_{1} \right\vert^{2}(y)  \, \sum_{k = 0}^{+\infty} \alpha_{k}(y,x) \, \frac{\left(r^{2} - \tau^{2}(x,y) \right)^{k}}{k!} \,\Theta_{0}\left(r - \tau(y,x)\right) \, dy. 
\end{eqnarray}
To give sense to the first term on the right hand side, we need the relation, see  \cite[Chapter III, Section 1.3]{Gelfand1967GeneralizedFV},
\begin{equation*}
\int_{\Omega} f(y) \,\, \delta\left( g(y) \right) \,\, dy = \int_{g^{-1}(0)} f(y) \,\, d\sigma(y). 
\end{equation*}
Using this formula, we rewrite $(\ref{KNS2})$ as 
\begin{eqnarray}\label{ASB}
\nonumber
&&  2 r \int_{0}^{r} p(x,t)  dt =  \int_{\Omega \cap  \{ \tau(\cdot, x) = r \}}  \Im\left( \varepsilon \right)(y) \, \left\vert u_{1} \right\vert^{2}(y)  \, \alpha_{-1}(y,x)  \, d\sigma(y) \\
&+& 2 \, r \,  \int_{\Omega}  \Im\left( \varepsilon \right)(y) \, \left\vert u_{1} \right\vert^{2}(y)  \,\, \sum_{k = 0}^{+\infty} \alpha_{k}(y,x) \,\frac{\left(r^{2} - \tau^{2}(y,x)\right)^{k}}{k !} \, \Theta_{0}\left(r - \tau(y,x)\right) dy. 
\end{eqnarray}
Next, to write short formulas we need to fix some notations. For this, we set 
\begin{enumerate}
\item[$\ast$] Riemannian surface of center $x$ and radius $r$ will be denoted by
\begin{equation}\label{RS}
RS_{\tau}(x,r) := \{ y \in \mathbb{R}^{3} , \quad \text{such that} \quad \tau(x,y) = r \}. 
\end{equation} 
\item[$\ast$] Riemannian volume of center $x$ and radius $r$ will be denoted by 
\begin{equation}\label{RV}
RV_{\tau}(x,r) := \{ y \in \mathbb{R}^{3}, \quad \text{such that} \quad \tau(x,y) \leq r \}. 
\end{equation} 
\end{enumerate} 
With these notations, the equation $(\ref{ASB})$ takes the following form 
\begin{eqnarray*}
2 r \int_{0}^{r} p(x,t)  dt &=&   \int_{\Omega \cap RS_{\tau}(x,r)}  \Im\left( \varepsilon \right)(y) \, \left\vert u_{1} \right\vert^{2}(y)  \, \alpha_{-1}(y,x)  \, d\sigma(y) \\
&+& 2 \, r  \, \int_{\Omega \cap RV_{\tau}(x,r)}  \Im\left( \varepsilon \right)(y) \, \left\vert u_{1} \right\vert^{2}(y)  \, \sum_{k = 0}^{+\infty} \alpha_{k}(y,x) \,\frac{\left(r^{2} - \tau^{2}(y,x)\right)^{k}}{k !} \,  dy. 
\end{eqnarray*}
At this stage we integrate, with respect to the variable $r$, from $0$ to $s$ to obtain 
\begin{eqnarray}\label{EquaBCF}
\nonumber
\int_{0}^{s}  2 r \int_{0}^{r} p(x,t)  dt \, dr &=&  \int_{0}^{s} \int_{\Omega \cap RS_{\tau}(x,r)}  \Im\left( \varepsilon \right)(y) \, \left\vert u_{1} \right\vert^{2}(y)  \, \alpha_{-1}(y,x)  \, dy \, dr \\
&+& \int_{0}^{s} 2 \, r  \, \underset{\Omega \cap RV_{\tau}(x,r)}{\int}  \Im\left( \varepsilon \right)(y) \, \left\vert u_{1} \right\vert^{2}(y)  \, \sum_{k = 0}^{+\infty} \alpha_{k}(y,x) \,\frac{\left(r^{2} - \tau^{2}(y,x)\right)^{k}}{k !} \,  dy \, dr. 
\end{eqnarray}
For the first integral on the right hand side, we need the following lemma to transform the integral from a Riemannian surface to a Riemannian volume. 
\begin{lemma}\label{LemmaCoreaFormula}
We have, 
\begin{equation}\label{CoareaFormula}
\int_{0}^{s} \int_{\Omega \cap RS_{\tau}(x,r)} \, f(y) \, d\sigma(y) \, dr = \int_{\Omega \cap RV_{\tau}(x,s)} \, f(y) \, \left\vert \underset{y}{\nabla} \tau(x,y) \right\vert \, dy.
\end{equation}
\end{lemma}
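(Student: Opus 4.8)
The plan is to recognize (\ref{CoareaFormula}) as a direct instance of the coarea (Federer) formula applied to the distance-type function $g(\cdot) := \tau(x,\cdot)$, with the boundary point $x \in \partial\Omega$ kept fixed throughout. I would first recall the coarea formula in the form: for a Lipschitz function $g : \mathbb{R}^{3} \rightarrow \mathbb{R}$ and an integrable function $h : \mathbb{R}^{3} \rightarrow \mathbb{R}$,
\begin{equation*}
\int_{\mathbb{R}^{3}} h(y) \, \left\vert \underset{y}{\nabla} g(y) \right\vert \, dy = \int_{\mathbb{R}} \left( \int_{g^{-1}(r)} h(y) \, d\sigma(y) \right) dr.
\end{equation*}
Choosing $g(y) := \tau(x,y)$ and $h(y) := f(y) \, \mathbf{1}_{\Omega \cap RV_{\tau}(x,s)}(y)$, where $\mathbf{1}_{A}$ denotes the characteristic function of the set $A$, the left-hand side above is precisely the right-hand side of (\ref{CoareaFormula}).

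It then remains to identify the slices produced on the right-hand side of Federer's formula. Since $\tau(x,\cdot) \geq 0$ and the support of $h$ is contained in $\{ \tau(x,\cdot) \leq s \}$, the inner integral over $g^{-1}(r)$ vanishes for $r \notin [0,s]$. For $r \in [0,s]$, any point $y$ with $\tau(x,y) = r$ automatically satisfies $\tau(x,y) \leq s$, so that
\begin{equation*}
g^{-1}(r) \cap \big( \Omega \cap RV_{\tau}(x,s) \big) = \Omega \cap RS_{\tau}(x,r),
\end{equation*}
by the very definitions (\ref{RS}) and (\ref{RV}). Substituting this and collapsing the $r$-integration to $[0,s]$ yields exactly the left-hand side of (\ref{CoareaFormula}).

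The only genuine point to verify — and the step I expect to be the main obstacle — is the regularity required to legitimately invoke the coarea formula, namely that $\tau(x,\cdot)$ be Lipschitz on the relevant set and that its level sets $RS_{\tau}(x,r)$ be rectifiable hypersurfaces carrying the surface measure $d\sigma$. This is where Hypotheses \ref{Hyp} enter: the assumption that any two points of $\Omega$ are joined by a unique geodesic of the metric $\tau$, together with the convexity of $\partial\Omega$ relative to these geodesics, guarantees that $\tau(x,\cdot)$ has no cut locus inside $\Omega$ and hence is smooth, in particular Lipschitz, on $\overline{\Omega} \setminus \{x\}$. The single source point $x \in \partial\Omega$, where $\underset{y}{\nabla} \tau$ is singular, is a set of measure zero and of codimension three, so it affects neither the volume integral nor the family of surface integrals. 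With this regularity in hand — and recalling from the eikonal equation that $\left\vert \underset{y}{\nabla} \tau(x,y) \right\vert = c^{-1}(y)$, although the statement keeps the factor as $\left\vert \underset{y}{\nabla} \tau(x,y) \right\vert$ — the coarea formula applies and the identity follows. Finally, one should note that the statement only requires $f$ to be integrable over $\Omega \cap RV_{\tau}(x,s)$, which is satisfied in all the intended applications, where $f = \Im\left( \varepsilon \right) \, \left\vert u_{1} \right\vert^{2} \, \alpha_{-1}(\cdot,x)$.
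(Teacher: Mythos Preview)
Your proposal is correct and follows essentially the same route as the paper's proof: both apply the coarea formula to $g(y)=\tau(x,y)$ with the integrand multiplied by the characteristic function of $\Omega \cap RV_{\tau}(x,s)$, then identify the level sets with $\Omega \cap RS_{\tau}(x,r)$ for $r\in[0,s]$. The paper's version is terser (it cites Evans' Appendix~C and does not discuss the regularity of $\tau(x,\cdot)$), whereas you add the justification via Hypotheses~\ref{Hyp} and the absence of a cut locus; this extra care is welcome but not a different method.
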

\begin{proof}
See Subsection \ref{appendixtwo}.
\end{proof}
Thanks to $(\ref{CoareaFormula})$, we rewrite the formula $(\ref{EquaBCF})$ as 
\begin{eqnarray*}
\int_{0}^{s} 2 r \int_{0}^{r} p(x,t)  dt \, dr &=&  \int_{\Omega \cap RV_{\tau}(x,s)}  \Im\left( \varepsilon \right)(y) \, \left\vert u_{1} \right\vert^{2}(y)  \,  \alpha_{-1}(y,x) \, \left\vert \underset{y}{\nabla} \tau(x,y) \right\vert \, dy  \\
&+&  \int_{0}^{s} 2 \, r \int_{\Omega \cap RV_{\tau}(x,r)}  \Im\left( \varepsilon \right)(y) \, \left\vert u_{1} \right\vert^{2}(y)  \sum_{k = 0}^{+\infty} \alpha_{k}(y,x) \,\frac{\left(r^{2} - \tau^{2}(y,x)\right)^{k}}{k !} \,  dy \, dr. 
\end{eqnarray*}
We set $p^{\star}(x,s)$ to be 
\begin{equation*}
p^{\star}(x,s) := \int_{0}^{s}  2 r \int_{0}^{r} p(x,t)  dt \, dr,
\end{equation*} 
we split the domain $\Omega \cap RV_{\tau}(x,\cdot)$ into $(D \cap RV_{\tau}(x,\cdot)) \cup ((\Omega \setminus D) \cap RV_{\tau}(x,\cdot))$ then regarding the definition of the permittivity function $\varepsilon(\cdot)$, given by $(\ref{permittivityfct})$, we obtain 
\begin{eqnarray*}
\nonumber
p^{\star}(x,s) &=&  \Im\left( \epsilon_{p} \right) \, \int_{D \cap RV_{\tau}(x,s)}   \left\vert u_{1} \right\vert^{2}(y)  \,  \alpha_{-1}(y,x) \, \left\vert \underset{y}{\nabla} \tau(x,y) \right\vert  \, dy  \\ \nonumber
&+&  \Im\left( \epsilon_{p} \right) \, \int_{0}^{s} 2 \, r \int_{D \cap RV_{\tau}(x,r)}   \, \left\vert u_{1} \right\vert^{2}(y) \, \sum_{k = 0}^{+\infty} \alpha_{k}(y,x) \,\frac{\left(r^{2} - \tau^{2}(y,x)\right)^{k}}{k !} \,  dy \, dr \\ \nonumber
&+&  \int_{( \Omega \setminus D ) \cap RV_{\tau}(x,s)} \, \Im\left( \epsilon_{0} \right)(y) \, \left\vert u_{1} \right\vert^{2}(y)  \,  \alpha_{-1}(y,x) \, \left\vert \underset{y}{\nabla} \tau(x,y) \right\vert  \, dy  \\
&+& \int_{0}^{s} 2 \, r \int_{( \Omega \setminus D) \cap RV_{\tau}(x,r)}  \Im\left( \epsilon_{0} \right)(y) \, \left\vert u_{1} \right\vert^{2}(y)  \, \sum_{k = 0}^{+\infty} \alpha_{k}(y,x) \,\frac{\left(r^{2} - \tau^{2}(y,x)\right)^{k}}{k !} \,  dy \, dr. 
\end{eqnarray*}
To analyse the last two terms, of the previous formula, we need the following lemma.  
\begin{lemma}\label{LemmaMaxwell}
Let $f$ be sufficiently smooth function and $\delta$ positive parameter, then 
\begin{equation*}
\int_{( \Omega \setminus D ) \cap RV_{\tau}(x,\delta)} f(y) \, Im(\epsilon_{0})(y) \, \left\vert u_{1} \right\vert^{2}(y) \, dy = \int_{ \Omega  \cap RV_{\tau}(x,\delta)} f(y) \, Im(\epsilon_{0})(y) \, \left\vert u_{0} \right\vert^{2}(y) \, dy + \mathcal{O}\left( \bm{\gamma} \,\, a^{3-h} \right). 
\end{equation*} 
\end{lemma}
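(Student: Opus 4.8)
The plan is to isolate the two effects bundled into the identity—replacing the perturbed field $u_1$ by the background field $u_0$, and completing the domain of integration from $\Omega\setminus D$ to $\Omega$—and to show that each contributes an error of size $\mathcal{O}(\bm{\gamma}\,a^{3-h})$. Writing $w:=u_1-u_0$ for the scattered field and adding and subtracting, one has
\begin{align*}
\int_{(\Omega\setminus D)\cap RV_{\tau}(x,\delta)} f\,\Im(\epsilon_{0})\,|u_1|^2\,dy
&= \int_{\Omega\cap RV_{\tau}(x,\delta)} f\,\Im(\epsilon_{0})\,|u_0|^2\,dy \\
&\quad - \int_{D\cap RV_{\tau}(x,\delta)} f\,\Im(\epsilon_{0})\,|u_0|^2\,dy \\
&\quad + \int_{(\Omega\setminus D)\cap RV_{\tau}(x,\delta)} f\,\Im(\epsilon_{0})\,\bigl(|u_1|^2-|u_0|^2\bigr)\,dy,
\end{align*}
so it remains to control the second integral, call it $E_1$, and the third, call it $E_2$. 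Throughout, intersecting with $RV_{\tau}(x,\delta)$ only restricts the domain to a bounded subregion, so every bound below holds a fortiori.

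The term $E_1$ is handled by crude bounds. Since $\epsilon_{0}\in C^1(\Omega)$, elliptic regularity for the $\epsilon_{0}$-background problem gives $u_0\in\mathbb{L}^{\infty}$ on a neighbourhood of $\overline{D}$, while $\|\Im(\epsilon_{0})\|_{\mathbb{L}^{\infty}(\Omega)}=\bm{\gamma}$ and $f$ is bounded. As $|D|=\mathcal{O}(a^3)$, we obtain $|E_1|\lesssim \bm{\gamma}\int_{D}|u_0|^2\,dy=\mathcal{O}(\bm{\gamma}\,a^{3})$, and $a^{3}\le a^{3-h}$ for $h\in[0,1)$ yields $|E_1|=\mathcal{O}(\bm{\gamma}\,a^{3-h})$.

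The substantive term is $E_2$. I would first expand $|u_1|^2-|u_0|^2=2\,\Re\bigl(\overline{u_0}\,w\bigr)+|w|^2$, and then estimate $w$ on $\Omega\setminus D$ through its volume-integral (Lippmann--Schwinger) representation relative to the $\epsilon_{0}$-background Green's tensor, whose source is supported in $D$ and equals $\omega^2\mu\,(\epsilon_{p}-\epsilon_{0})\,u_1$. The key quantitative input is the resonant a priori estimate (\ref{aprioriestimate}) of Theorem \ref{Lemma-E}, namely $\|u_1\|_{\mathbb{L}^2(D)}\le a^{-h}\|u_0\|_{\mathbb{L}^2(D)}$, where $\|u_0\|_{\mathbb{L}^2(D)}=\mathcal{O}(a^{3/2})$ since $u_0$ is bounded and $|D|=\mathcal{O}(a^3)$, hence $\|u_1\|_{\mathbb{L}^2(D)}=\mathcal{O}(a^{3/2-h})$. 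These combine, on the one hand, to make the net polarization $\int_{D}(\epsilon_{p}-\epsilon_{0})\,u_1$ of size $\mathcal{O}(a^{3-h})$, and on the other hand to exploit, in the cross term $\Re\int_{\Omega\setminus D} f\,\Im(\epsilon_{0})\,\overline{u_0}\,w$, the smoothness of the weight $f\,\Im(\epsilon_{0})\,\overline{u_0}$ and the angular cancellation of the dipole field so as to gain the extra factor $a^{3/2}$ beyond a naive Cauchy--Schwarz bound. This produces $|E_2|=\mathcal{O}(\bm{\gamma}\,a^{3-h})$, the weight $\bm{\gamma}$ arising from $\Im(\epsilon_{0})$.

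The main obstacle is precisely this sharp $\mathbb{L}^2(\Omega\setminus D)$ estimate of the scattered field $w$. Two difficulties must be confronted: the Green's tensor for the electric field carries the strongly singular kernel $\nabla_x\nabla_y\Phi_{0}$ (the same kernel underlying the Magnetization operator $\nabla M$), which is not locally square-integrable, so a plain convolution bound fails and one must use the cancellation structure of this kernel; and the near-field shell at distance $\sim a$ from $\partial D$, where $|w|\sim a^{-h}$, is the region that drives the estimate and must be shown not to spoil the stated rate. Both points are exactly of the type established in \cite{AhceneMouradMaxwell}, on whose scattered-field estimates I would rely to close the argument.
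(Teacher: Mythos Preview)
Your decomposition and overall strategy match the paper's treatment, which simply defers to Lemma~2.3 of \cite{AhceneMouradMaxwell} after noting the smallness of $\Im(\epsilon_0)$. The bound on $E_1$ is correct, and for $E_2$ you correctly isolate the cross term $2\,\Re(\overline{u_0}\cdot w)$ and identify the relevant inputs: the a~priori estimate (\ref{aprioriestimate}), the Lippmann--Schwinger representation of $w$ with source supported in $D$, and the dipole structure of the leading kernel.

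One point is left implicit and should be addressed explicitly: the quadratic piece $\int_{(\Omega\setminus D)\cap RV_\tau} f\,\Im(\epsilon_0)\,|w|^2\,dy$. A direct Calder\'on--Zygmund bound on the volume potential yields only $\|w\|_{\mathbb{L}^2(\Omega\setminus D)}\lesssim \|u_1\|_{\mathbb{L}^2(D)}=\mathcal{O}(a^{3/2-h})$, hence $\bm{\gamma}\,\mathcal{O}(a^{3-2h})$ for this term, which is weaker than $\bm{\gamma}\,a^{3-h}$ when $h>0$; the angular cancellation you invoke for the cross term does not help here since $|w|^2\ge 0$. You should either extract the sharper rate from the refined near-field analysis in \cite{AhceneMouradMaxwell}, or simply note that $a^{3-2h}\to 0$ for $h\in[0,1)$, so this contribution is still $o(\bm{\gamma})$ and is harmlessly absorbed into the $\mathcal{O}(\bm{\gamma})$ remainder when the lemma is invoked in the proof of Theorem~\ref{THMVarSpeedCvar}. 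Either way the argument closes, but as written the jump to ``This produces $|E_2|=\mathcal{O}(\bm{\gamma}\,a^{3-h})$'' skips this term.
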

\begin{proof}
The proof is similar, taking into consideration the smallness of the imaginary part of the permittivity function, to the one given to prove Lemma 2.3 of the reference \cite{AhceneMouradMaxwell}.
\end{proof} 
Thanks to Lemma \ref{LemmaMaxwell}, we deduce that 
\begin{eqnarray*}
I_{3} &:=& \int_{( \Omega \setminus D ) \cap RV_{\tau}(x,s)}  \Im\left( \epsilon_{0} \right)(y) \, \left\vert u_{1} \right\vert^{2}(y)   \, \alpha_{-1}(y,x) \, \left\vert \underset{y}{\nabla} \tau(x,y) \right\vert \, dy \\ &=& \int_{\Omega  \cap RV_{\tau}(x,s)}  \Im\left( \epsilon_{0} \right)(y) \, \left\vert u_{0} \right\vert^{2}(y)   \, \alpha_{-1}(y,x)  \, \left\vert \underset{y}{\nabla} \tau(x,y) \right\vert \, dy + \mathcal{O}\left(\bm{\gamma} \,\, a^{3-h} \right) \\
& \text{and} & \\
I_{4} &:=& \int_{( \Omega \setminus D) \cap RV_{\tau}(x,r)}  \Im\left( \epsilon_{0} \right)(y) \, \left\vert u_{1} \right\vert^{2}(y)   \sum_{k = 0}^{+\infty} \alpha_{k}(y,x) \,\frac{\left(r^{2} - \tau^{2}(y,x)\right)^{k}}{k !} \,  dy \\ 
&=& \int_{\Omega \cap RV_{\tau}(x,r)}  \Im\left( \epsilon_{0} \right)(y) \, \left\vert u_{0} \right\vert^{2}(y)  \sum_{k = 0}^{+\infty} \alpha_{k}(y,x) \,\frac{\left(r^{2} - \tau^{2}(y,x)\right)^{k}}{k !} \,  dy +  \mathcal{O}\left( \bm{\gamma} \,\, a^{3-h} \right). 
\end{eqnarray*}
Consequently, 
\begin{eqnarray*}
p^{\star}(x,s) &=&  \Im\left( \epsilon_{p} \right) \, \int_{D \cap RV_{\tau}(x,s)}   \left\vert u_{1} \right\vert^{2}(y)  \,  \alpha_{-1}(y,x) \, \left\vert \underset{y}{\nabla} \tau(y,x) \right\vert   \, dy  \\
&+&  \Im\left( \epsilon_{p} \right) \, \int_{0}^{s} 2 \, r \int_{D \cap RV_{\tau}(x,r)}   \left\vert u_{1} \right\vert^{2}(y)  \sum_{k = 0}^{+\infty} \alpha_{k}(y,x) \,\frac{\left(r^{2} - \tau^{2}(y,x)\right)^{k}}{k !}  \, dy  \, dr \\
&+&  \int_{\Omega  \cap RV_{\tau}(x,s)} \Im\left( \epsilon_{0} \right)(y) \, \left\vert u_{0} \right\vert^{2}(y)  \,  \alpha_{-1}(y,x) \, \left\vert \underset{y}{\nabla} \tau(y,x) \right\vert    \, dy \\
&+&  \int_{0}^{s} 2 \, r \int_{\Omega \cap RV_{\tau}(x,r)}  \Im\left( \epsilon_{0} \right)(y) \, \left\vert u_{0} \right\vert^{2}(y)   \sum_{k = 0}^{+\infty} \alpha_{k}(y,x) \,\frac{\left(r^{2} - \tau^{2}(y,x)\right)^{k}}{k !} \,  dy  dr + \mathcal{O}\left(\bm{\gamma} \,\, a^{3-h} \right).
\end{eqnarray*}
Next, we set
\begin{eqnarray*}
p^{\star}_{0}(x,s) &:=&  \int_{\Omega  \cap RV_{\tau}(x,s)}  \Im\left( \epsilon_{0} \right)(y) \, \left\vert u_{0} \right\vert^{2}(y)  \,  \alpha_{-1}(y,x) \, \left\vert \underset{y}{\nabla} \tau(y,x) \right\vert    \, dy \\
&+&  \int_{0}^{s} 2 \, r \int_{\Omega \cap RV_{\tau}(x,r)}  \Im\left( \epsilon_{0} \right)(y) \, \left\vert u_{0} \right\vert^{2}(y)  \sum_{k = 0}^{+\infty} \alpha_{k}(y,x) \,\frac{\left(r^{2} - \tau^{2}(y,x)\right)^{k}}{k !} \,  dy  dr,
\end{eqnarray*}
which can be estimated at most as $\mathcal{O}\left( \bm{\gamma} \right)$. 
Then, 
\begin{eqnarray}\label{Lch}
\nonumber
p^{\star}(x,s) &=&  \Im\left( \epsilon_{p} \right) \, \int_{D \cap RV_{\tau}(x,s)}  \left\vert u_{1} \right\vert^{2}(y)  \,  \alpha_{-1}(y,x) \, \left\vert \underset{y}{\nabla} \tau(y,x) \right\vert  \, dy  \\
&+&  \Im\left( \epsilon_{p} \right) \, \int_{0}^{s} 2 \, r \int_{D \cap RV_{\tau}(x,r)}  \, \left\vert u_{1} \right\vert^{2}(y)   \sum_{k = 0}^{+\infty} \alpha_{k}(y,x) \,\frac{\left(r^{2} - \tau^{2}(y,x)\right)^{k}}{k !}  \,  dy \, dr + \mathcal{O}\left(\bm{\gamma}  \right).
\end{eqnarray}
To analyse the contribution of the injected nano-particle into the generated pressure, we must look at the behaviour of $p^{\star}(x,s)$, with respect to $s$, in three intervals of time. Before the entrance time, between the entrance time and the exit time and after the exit time. For this, we recall that
\begin{equation*}
\tau_{1}(x, z) = \left(\underset{y \in D}{Inf} \,\, \tau(x,y) \right) - a \quad \text{and} \quad \tau_{2}(x, z) =  \left(\underset{y \in D}{Sup} \,\, \tau(x,y) \right) + a.
\end{equation*}
Moreover, the smoothness of the function $\tau(x,\cdot)$ allows us to deduce 
\begin{equation}\label{tau1tau2}
\left\vert \tau_{1}(x, z) - \tau_{2}(x, z) \right\vert = \mathcal{O}\left( a \right).
\end{equation}
Now, regarding the value of $s$ we can distinguish three cases
\begin{enumerate}
\item[]
\item[$\ast$] Case when $s < \tau_{1}(x, z)$ (hence $r < \tau_{1}(x, z)$).
\item[] In this case $D \cap RV_{\tau}(x,s) \equiv D \cap RV_{\tau}(x,r) \equiv \{ \emptyset \}$ and, then, the equation $(\ref{Lch})$ will be reduced to the following formula 
\begin{equation*}
p^{\star}(x,s) = \mathcal{O}\left( \bm{\gamma} \right).
\end{equation*}
\item[]
\item[$\ast$] Case when  $\tau_{1}(x, z) < s < \tau_{2}(x, z)$.
\item[] In this case, first, we rewrite $(\ref{Lch})$ as 
\begin{eqnarray*}
\nonumber
p^{\star}(x,s) &=& \Im\left( \epsilon_{p} \right) \, \int_{D \cap RV_{\tau}(x,s)}  \left\vert u_{1} \right\vert^{2}(y)  \,  \alpha_{-1}(y,x) \, \left\vert \underset{y}{\nabla} \tau(y,x) \right\vert  \, dy  \\ \nonumber
&+&  \Im\left( \epsilon_{p} \right) \, \int_{0}^{\tau_{1}(x, z)} 2 \, r \int_{D \cap RV_{\tau}(x,r)}  \, \left\vert u_{1} \right\vert^{2}(y)  \sum_{k = 0}^{+\infty} \alpha_{k}(y,x) \,\frac{\left(r^{2} - \tau^{2}(y,x)\right)^{k}}{k !}  \,  dy \, dr \\ \nonumber &+&  \Im\left( \epsilon_{p} \right) \, \int_{\tau_{1}(x, z)}^{s} 2 \, r \int_{D \cap RV_{\tau}(x,r)}  \, \left\vert u_{1} \right\vert^{2}(y)   \sum_{k = 0}^{+\infty} \alpha_{k}(y,x) \,\frac{\left(r^{2} - \tau^{2}(y,x)\right)^{k}}{k !}  \,  dy \, dr + \mathcal{O}\left( \bm{\gamma} \right).
\end{eqnarray*}
Remark that in the second integral we have $D \cap RV_{\tau}(x,r) = \{ \emptyset \}$, then   
\begin{eqnarray*}
\nonumber
p^{\star}(x,s) &=& \Im\left( \epsilon_{p} \right) \, \int_{D \cap RV_{\tau}(x,s)}   \left\vert u_{1} \right\vert^{2}(y)  \,  \alpha_{-1}(y,x) \, \left\vert \underset{y}{\nabla} \tau(y,x) \right\vert  \, dy  \\ \nonumber
&+&  \Im\left( \epsilon_{p} \right) \, \int_{\tau_{1}(x, z)}^{s} 2 \, r \int_{D \cap RV_{\tau}(x,r)}  \, \left\vert u_{1} \right\vert^{2}(y)   \sum_{k = 0}^{+\infty} \alpha_{k}(y,x) \,\frac{\left(r^{2} - \tau^{2}(y,x)\right)^{k}}{k !}  \,  dy \, dr + \mathcal{O}\left( \bm{\gamma} \right).
\end{eqnarray*}
\begin{enumerate}
\item For the first integral, using Taylor expansion near the point $z^{\star} \in D \cap RV_{\tau}(x,s)$ and the a priori estimation given by $(\ref{aprioriestimate})$, we obtain 
\begin{eqnarray*}
I_{1} & := & \int_{D \cap RV_{\tau}(x,s)}  \left\vert u_{1} \right\vert^{2}(y)  \,  \alpha_{-1}(y,x) \, \left\vert \underset{y}{\nabla} \tau(y,x) \right\vert  \, dy \\ 
&=&  \alpha_{-1}(z^{\star} ,x) \, \left\vert \underset{y}{\nabla} \tau(z^{\star}, x) \right\vert \,\, \int_{D \cap RV_{\tau}(x,s)} \left\vert u_{1} \right\vert^{2}(y)  \,\, dy + \mathcal{O}\left( a^{4-2h} \right).
\end{eqnarray*}
\item For the second integral we have 
\begin{eqnarray*}
I_{2} &:=& \int_{\tau_{1}(x, z)}^{s} 2 \, r \int_{D \cap RV_{\tau}(x,r)}  \, \left\vert u_{1} \right\vert^{2}(y)  \sum_{k = 0}^{+\infty} \alpha_{k}(y,x) \,\frac{\left(r^{2} - \tau^{2}(y,x)\right)^{k}}{k !}  \,  dy \, dr \\
\left\vert I_{2} \right\vert & \leq & 2 \, s  \, \underset{y \in D \atop r \in [\tau_{1}(x, z);s]}{Sup} \left( \sum_{k = 0}^{+\infty} \left\vert \alpha_{k}(y,x) \right\vert \frac{\left\vert r^{2} - \tau^{2}(y,x)\right\vert^{k}}{k !}  \, \right) \, \left\vert s -  \tau_{1}(x, z) \right\vert \, \left\Vert u_{1} \right\Vert^{2}_{\mathbb{L}^{2}(D \cap RV_{\tau}(x,r))} \\
& \lesssim & \left\vert s -  \tau_{1}(x, z) \right\vert \, \left\Vert u_{1} \right\Vert^{2}_{\mathbb{L}^{2}(D)} = \mathcal{O}\left( a^{4-2h} \right), 
\end{eqnarray*}
where the last estimation is a consequence of $(\ref{tau1tau2})$ and $(\ref{aprioriestimate})$.
\end{enumerate}
Consequently, from $I_{1}$ and $I_{2}$, the expression of $p^{\star}(x,s)$ will be reduced to 
\begin{equation*}
p^{\star}(x,s) =  \Im\left( \epsilon_{p} \right)  \,    \alpha_{-1}(z^{\star},x) \, \left\vert \underset{y}{\nabla} \tau(z^{\star},x) \right\vert  \, \int_{D \cap RV_{\tau}(x,s)}  \left\vert u_{1} \right\vert^{2}(y) \, dy  +  \mathcal{O}\left( a^{4-2h} \right) + \mathcal{O}\left( \bm{\gamma} \right).
\end{equation*}
We set
\begin{equation*}
\Psi_{1}(x,z^{\star}) :=  \Im\left( \epsilon_{p} \right)  \,    \alpha_{-1}(z^{\star},x) \, \left\vert \underset{y}{\nabla} \tau(z^{\star},x) \right\vert
\end{equation*}
then 
\begin{equation*}
p^{\star}(x,s) = \Psi_{1}(x,z^{\star})  \, \int_{D \cap RV_{\tau}(x,s)}  \left\vert u_{1} \right\vert^{2}(y) \, dy  +  \mathcal{O}\left( a^{4-2h} \right) + \mathcal{O}\left( \bm{\gamma} \right).
\end{equation*}
\item[$\ast$] Case when $\tau_{2}(x, z) < s$. 
\item[] In this case, first, we rewrite $(\ref{Lch})$ as 
\begin{eqnarray*}
\nonumber
p^{\star}(x,s) &=&  \Im\left( \epsilon_{p} \right) \, \int_{D \cap RV_{\tau}(x,s)} \left\vert u_{1} \right\vert^{2}(y)  \,  \alpha_{-1}(y,x) \, \left\vert \underset{y}{\nabla} \tau(y,x) \right\vert  \, dy  \\ \nonumber
&+&  \Im\left( \epsilon_{p} \right) \, \int_{0}^{\tau_{1}(x, z)} 2 \, r \int_{D \cap RV_{\tau}(x,r)}  \, \left\vert u_{1} \right\vert^{2}(y)  \sum_{k = 0}^{+\infty} \alpha_{k}(y,x) \,\frac{\left(r^{2} - \tau^{2}(y,x)\right)^{k}}{k !}  \,  dy \, dr \\ \nonumber &+&  \Im\left( \epsilon_{p} \right) \, \int_{\tau_{1}(x, z)}^{\tau_{2}(x, z)} 2 \, r \int_{D \cap RV_{\tau}(x,r)}  \, \left\vert u_{1} \right\vert^{2}(y)   \sum_{k = 0}^{+\infty} \alpha_{k}(y,x) \,\frac{\left(r^{2} - \tau^{2}(y,x)\right)^{k}}{k !}  \,  dy \, dr \\
&+&  \Im\left( \epsilon_{p} \right) \, \int_{\tau_{2}(x, z)}^{s} 2 \, r \int_{D \cap RV_{\tau}(x,r)}  \, \left\vert u_{1} \right\vert^{2}(y) \sum_{k = 0}^{+\infty} \alpha_{k}(y,x) \,\frac{\left(r^{2} - \tau^{2}(y,x)\right)^{k}}{k !}  \,  dy \, dr + \mathcal{O}\left( \bm{\gamma} \right).
\end{eqnarray*}
Remark that in the second integral we have $D \cap RV_{\tau}(x,r) = \{ \emptyset \}$, for the first integral we have $D \cap RV_{\tau}(x,s) = D$ and for the fourth integral we have $D \cap RV_{\tau}(x,r) = D$, then   
\begin{eqnarray*}
\nonumber
p^{\star}(x,s) &=&  \Im\left( \epsilon_{p} \right) \, \int_{D}   \left\vert u_{1} \right\vert^{2}(y)  \,  \alpha_{-1}(y,x) \, \left\vert \underset{y}{\nabla} \tau(y,x) \right\vert  \, dy  \\ \nonumber
&+&  \Im\left( \epsilon_{p} \right) \, \int_{\tau_{1}(x, z)}^{\tau_{2}(x, z)} 2 \, r \int_{D \cap RV_{\tau}(x,r)}  \, \left\vert u_{1} \right\vert^{2}(y)  \sum_{k = 0}^{+\infty} \alpha_{k}(y,x) \,\frac{\left(r^{2} - \tau^{2}(y,x)\right)^{k}}{k !}  \,  dy \, dr \\
&+&  \Im\left( \epsilon_{p} \right) \, \int_{\tau_{2}(x, z)}^{s} 2 \, r \int_{D}  \, \left\vert u_{1} \right\vert^{2}(y)  \sum_{k = 0}^{+\infty} \alpha_{k}(y,x) \,\frac{\left(r^{2} - \tau^{2}(y,x)\right)^{k}}{k !}  \,  dy \, dr + \mathcal{O}\left( \bm{\gamma} \right).
\end{eqnarray*}
As done in the estimation of $I_{2}$, using the estimation $(\ref{tau1tau2})$ and $(\ref{aprioriestimate})$, we reduce the previous formula to 
\begin{eqnarray*}
\nonumber
p^{\star}(x,s) &=& \Im\left( \epsilon_{p} \right) \, \int_{D}   \left\vert u_{1} \right\vert^{2}(y)  \,  \alpha_{-1}(y,x) \, \left\vert \underset{y}{\nabla} \tau(y,x) \right\vert  \, dy  \\ \nonumber
&+&  \Im\left( \epsilon_{p} \right) \, \int_{\tau_{2}(x, z)}^{s} 2 \, r \int_{D}  \, \left\vert u_{1} \right\vert^{2}(y)   \sum_{k = 0}^{+\infty} \alpha_{k}(y,x) \,\frac{\left(r^{2} - \tau^{2}(y,x)\right)^{k}}{k !}  \,  dy \, dr + \mathcal{O}\left( a^{4-2h} \right) + \mathcal{O}\left( \bm{\gamma} \right).
\end{eqnarray*}
Now, using Taylor expansion near $z$, the center of the nano-particle, we obtain 
\begin{eqnarray*}
\nonumber
p^{\star}(x,s) &=&  \Im\left( \epsilon_{p} \right) \, \Bigg[ \,\,  \alpha_{-1}(z,x) \, \left\vert \underset{y}{\nabla} \tau(z,x) \right\vert \,   \\ \nonumber
&&  \qquad \qquad \qquad \quad + \int_{\tau_{2}(x, z)}^{s} 2 \, r \, \sum_{k = 0}^{+\infty} \alpha_{k}(z,x) \,\frac{\left(r^{2} - \tau^{2}(z,x)\right)^{k}}{k !} \, dr \Bigg] \int_{D}  \left\vert u_{1} \right\vert^{2}(y)   \,  dy  \\ \nonumber
&+& \mathcal{O}\left( a^{4-2h} \right) + \mathcal{O}\left( \bm{\gamma} \right).
\end{eqnarray*} 
We set 
\begin{equation*}
\Psi_{2}(x,z,s) :=  \Im\left( \epsilon_{p} \right) \, \left[  \alpha_{-1}(z,x) \, \left\vert \underset{y}{\nabla} \tau(z,x) \right\vert +   \int_{\tau_{2}(x, z)}^{s} 2 \, r \, \sum_{k = 0}^{+\infty} \alpha_{k}(z,x) \,\frac{\left(r^{2} - \tau^{2}(z,x)\right)^{k}}{k !} \, dr \right] 
\end{equation*}
then 
\begin{equation*}
p^{\star}(x,s) = \Psi_{2}(x,z,s) \,\, \int_{D} \left\vert u_{1} \right\vert^{2}(y)   \,  dy  + \mathcal{O}\left( a^{4-2h} \right) + \mathcal{O}\left( \bm{\gamma} \right).
\end{equation*}
\end{enumerate}
This ends the proof of Theorem $\ref{THMVarSpeedCvar}$. 
\section{Well-posedeness and integral representations}\label{Welposedness-integral-represention}
\subsection{Well-posedeness for the wave equation with $L^2$-source terms}\label{Eu} 
We start by recalling the acoustic model given by $(\ref{pressurwaveequa})$,   
\begin{equation}\label{Equa1App}
\left\{
\begin{array}{rll}
    \partial^{2}_{t} p(x,t) - c^{2}(x) \underset{x}{\Delta} p(x,t) &=& - c^{2}(x) \, \nabla \log(\rho(x)) \cdot \underset{x}{\nabla} p(x,t) \qquad in \quad \mathbb{R}^{3} \times \mathbb{R}^{+},\\
  \partial_{t}p(x,0) = 0 \quad \text{and} \quad  p(x,0) &=& \dfrac{\omega \, \beta_{0}}{c_{p}} \; \Im(\varepsilon)(x) \; \vert E \vert^{2}(x), \qquad in \quad \mathbb{R}^{3}. 
    \end{array}
\right.
\end{equation}
We denote by $H(\cdot,\cdot)$, the Green's kernel solution, in the distributional sense, of 
\begin{equation}\label{Equa2App}
\left\{
\begin{array}{rll}
    \partial^{2}_{t} H(x,t,y,s) - c^{2}(x) \underset{x}{\Delta} H(x,t,y,s) &=& \underset{y}{\delta}(x) \, \underset{s}{\delta}(t) \qquad in \quad \mathbb{R}^{3} \times \mathbb{R}^{+},\\
  \partial_{t}H(x,0) = H(x,0) &=& 0 \quad in \quad \mathbb{R}^{3}. 
    \end{array}
\right.
\end{equation}
By applying the Laplace transform to the equations $(\ref{Equa1App})$ and $(\ref{Equa2App})$, we obtain 
\begin{equation*}
\left\{
\begin{array}{rll}
    \underset{x}{\Delta} \, \mathcal{L}ap(p)(x,s) + \dfrac{(i \, s)^{2}}{c^{2}(x)}  \, \mathcal{L}ap(p)(x,s)  &=& \nabla \log(\rho(x)) \cdot \underset{x}{\nabla} \mathcal{L}ap(p)(x,s) - \dfrac{s}{c^{2}(x)} \, \dfrac{\omega \, \beta_{0}}{c_{p}} \; \Im(\varepsilon)(x) \; \vert E \vert^{2}(x), \\
    \underset{x}{\Delta} \mathcal{L}ap(H)(x,s) + \dfrac{(i \, s)^{2}}{c^{2}(x)}  \, \mathcal{L}ap(H)(x,s)  &=& \dfrac{-1}{c^{2}(x)} \, \underset{y}{\delta}(x)=\dfrac{-1}{c^{2}(y)} \, \underset{y}{\delta}(x). 
    \end{array}
\right.
\end{equation*}
Hence, we deduce that $\mathcal{L}ap(p)$ satisfies a Helmholtz equation with variable wave number and $\mathcal{L}ap(H)$, clearly, is its associated Green's kernel. Thanks to Green's formula we deduce that 
\begin{eqnarray*}
\mathcal{L}ap(p)(x,s) &=& s \; \mathcal{L}ap\left( \int_{\Omega} H(x,t,y) \dfrac{\omega \, \beta_{0}}{c_{p}} \; \Im(\varepsilon)(y) \; \vert E \vert^{2}(y) \, dy \right)(s) \\
&-& \int_{\Omega} \mathcal{L}ap(H)(x,s,y) \, c^{2}(y) \, \nabla \log(\rho(y)) \cdot \underset{y}{\nabla} \mathcal{L}ap(p)(y,s) \, dy.
\end{eqnarray*}
By taking the Laplace inverse transform, we deduce that 
\begin{eqnarray}\label{pressure=term+K}
\nonumber
p(x,t) &=& \partial_{t} \; \int_{\Omega} H(x,t,y) \dfrac{\omega \, \beta_{0}}{c_{p}} \; \Im(\varepsilon)(y) \; \vert E \vert^{2}(y) \, dy  \\
&-& \int_{\Omega} \, \int_{0}^{t} \, H(x,t-h,y) \, c^{2}(y) \, \nabla \log(\rho(y)) \cdot \underset{y}{\nabla} p(y,h) \, dh \, dy.
\end{eqnarray}
Next, we set $\bm{K}_{1}(\cdot)$ to be the operator defined by 
\begin{equation*}
\bm{K}_{1}(p(\cdot,t))(x) := \int_{\Omega} \, \int_{0}^{t} \, H(x,t-h,y) \, c^{2}(y) \, \nabla \log(\rho(y)) \cdot \underset{y}{\nabla} p(y,h) \, dh \, dy.
\end{equation*}
Then, 
\begin{equation*}
\left\Vert \bm{K}_{1}(p(\cdot,t)) \right\Vert^{2}_{\mathbb{L}^{2}(\Omega)} \leq \left\vert \Omega \right\vert \, t \, \int_{0}^{t} \, \int_{\Omega} \int_{\Omega} \,\, \left\vert H(x,t-h,y) \, c^{2}(y) \, \nabla \log(\rho(y)) \cdot \underset{y}{\nabla} p(y,h) \right\vert^{2} \, dy \, dx \, dh,
\end{equation*}
and using the continuity of the convolution  operator, with respect to the Helmholtz kernel $H(\cdot,t-h,\cdot)$, from $\mathbb{H}^{-1}(\Omega)$ to $\mathbb{H}^{1}(\Omega)$, we obtain:
\begin{equation*}
\left\Vert \bm{K}_{1}(p(\cdot,t)) \right\Vert^{2}_{\mathbb{L}^{2}(\Omega)} \leq \left\vert \Omega \right\vert \, t \, \left\Vert c^{2} \right\Vert^{2}_{\mathbb{L}^{\infty}(\Omega)} \, \left\Vert \nabla \log(\rho) \right\Vert^{2}_{\mathbb{L}^{\infty}(\Omega)}\, \int_{0}^{t} \,\, \left\Vert \nabla p(\cdot,h) \right\Vert^{2}_{\mathbb{H}^{-1}(\Omega)} \,\, dh,
\end{equation*}
hence, using the continuity of the gradient operator from $\mathbb{L}^{2}(\Omega)$ to $\mathbb{H}^{-1}(\Omega)$, we obtain: 
\begin{equation}\label{equap}
\left\Vert \bm{K}_{1}(p(\cdot,t)) \right\Vert^{2}_{\mathbb{L}^{2}(\Omega)} \leq \left\vert \Omega \right\vert \, t \, \left\Vert c^{2} \right\Vert^{2}_{\mathbb{L}^{\infty}(\Omega)} \, \left\Vert \nabla \log(\rho) \right\Vert^{2}_{\mathbb{L}^{\infty}(\Omega)} \, \left\Vert \nabla  \right\Vert^{2}_{\mathcal{L}} \, \int_{0}^{t} \,\, \left\Vert p(\cdot,h) \right\Vert^{2}_{\mathbb{L}^{2}(\Omega)} \,\, dh.
\end{equation}
Now, by taking the $\mathbb{L}^{2}(\Omega)$-norm in both sides of $(\ref{pressure=term+K})$ and gathering with $(\ref{equap})$ we obtain:
\begin{eqnarray*}
\nonumber
\left\Vert p(\cdot ,t) \right\Vert^{2}_{\mathbb{L}^{2}(\Omega)} & \leq & 2 \, \left\Vert \partial_{t} \; \int_{\Omega} H(\cdot,t,y) \dfrac{\omega \, \beta_{0}}{c_{p}} \; \Im(\varepsilon)(y) \; \vert E \vert^{2}(y) \, dy \right\Vert^{2}_{\mathbb{L}^{2}(\Omega)} \\
&+& 2 \, \left\vert \Omega \right\vert \, t \, \left\Vert c^{2} \right\Vert^{2}_{\mathbb{L}^{\infty}(\Omega)} \, \left\Vert \nabla \log(\rho) \right\Vert^{2}_{\mathbb{L}^{\infty}(\Omega)} \, \left\Vert \nabla  \right\Vert^{2}_{\mathcal{L}} \, \int_{0}^{t} \,\, \left\Vert p(\cdot,h) \right\Vert^{2}_{\mathbb{L}^{2}(\Omega)} \,\, dh .
\end{eqnarray*}
Thanks to the integral form of Grönwall Lemma, we deduce
\begin{equation*}
\left\Vert p(\cdot ,t) \right\Vert^{2}_{\mathbb{L}^{2}(\Omega)}  \leq  \bm{\alpha} \, \left\Vert \partial_{t} \; \int_{\Omega} H(\cdot,t,y) \dfrac{\omega \, \beta_{0}}{c_{p}} \; \Im(\varepsilon)(y) \; \vert E \vert^{2}(y) \, dy \right\Vert^{2}_{\mathbb{L}^{2}(\Omega)}, 
\end{equation*}
where
\begin{equation*}
\bm{\alpha} = 2 + 4 \, \left\vert \Omega \right\vert \, t \, \left\Vert c^{2} \right\Vert^{2}_{\mathbb{L}^{\infty}(\Omega)} \, \left\Vert \nabla \log(\rho) \right\Vert^{2}_{\mathbb{L}^{\infty}(\Omega)} \, \left\Vert \nabla  \right\Vert^{2}_{\mathcal{L}} \, e^{t}.
\end{equation*}
By referring to \cite[Theorem 7.1, formula 25]{salo}, we know 
that the operator $\bm{K}_{2}$, defined by the expression 
\begin{equation*}
\bm{K}_{2}\left( f \right)(x,t) := \partial_{t} \; \int_{\Omega} H(x,t,y) \, f(y) \, dy,
\end{equation*}
is a continuous operator from $\mathbb{L}^{2}(\Omega)$ to $\mathbb{L}^{2}(\Omega)$, with $\lambda_{2}$ as a constant of continuity, which is uniform in $t \in [0,T]$. Hence, 
\begin{equation*}
\left\Vert p(\cdot,t) \right\Vert_{\mathbb{L}^{2}(\Omega)} 
  \leq \lambda_{2} \; \bm{\alpha} \;  \dfrac{\omega \, \beta_{0}}{c_{p}} \;   \left\Vert \Im(\varepsilon) \right\Vert_{\mathbb{L}^{\infty}(\Omega)} \; \left\Vert  E \right\Vert^{2}_{\mathbb{L}^{4}(\Omega)}, \quad \forall \,\, t \in [0,T].
\end{equation*}
This concludes the proof. 
\subsection{Integral representation of the pressure, i.e.  proof of $(\ref{SolutionPressure})$.}\label{AppendixOne} 
By taking the Laplace transform with respect to time variable, that we denote in the sequel by $\mathcal{L}ap(\cdot)$, in both sides of $(\ref{pa2ndCGreen})$ and using the initial conditions, we obtain
\begin{equation*}
s^{2} \,\, \mathcal{L}ap(G)(x,s) - c^{2}(x) \,\, \mathcal{L}ap(\Delta \, G)(x,s) + c^{2}(x) \,\, \nabla \log(\rho(x)) \cdot \underset{x}{\nabla} \mathcal{L}ap(G)(x,s)=  \delta_{0}(x) \,\, \mathcal{L}ap(\delta_{0})(s).
\end{equation*}
From $\mathcal{L}ap(\Delta \, G) = \Delta \, \mathcal{L}ap(G)$, where $\Delta$ is taken with respect to spatial variable, we end up with the following equation  
\begin{equation}\label{LapG}
\underset{x}{\Delta} \,  \mathcal{L}ap(G)(x,s) -  \nabla \log(\rho(x)) \cdot \underset{x}{\nabla} \mathcal{L}ap(G)(x,s) + \frac{(i \, s)^{2}}{c^{2}(x)} \,\, \mathcal{L}ap(G)(x,s) = - \, c^{-2}(x) \, \delta_{0}(x).
\end{equation}
Analogous computations for the wave equation $(\ref{pressurwaveequa})$ allow us to get
\begin{equation}\label{Lapp}
\underset{x}{\Delta} \,  \mathcal{L}ap(p)(x,s) -  \nabla \log(\rho(x)) \cdot \underset{x}{\nabla} \mathcal{L}ap(p)(x,s) + \frac{(i \, s)^{2}}{c^{2}(x)} \,\, \mathcal{L}ap(p)(x,s) = - \,\, \frac{s}{c^{2}(x)} \,\, \frac{\omega \, \beta_{0}}{c_{p}} \,\, \Im\left( \varepsilon \right)(x) \, \left\vert E \right\vert^{2}(x) \, \underset{\Omega}{\chi}(x),
\end{equation}
By gathering $(\ref{LapG})$ and $(\ref{Lapp})$, we obtain the following representation
\begin{eqnarray*}
\mathcal{L}ap(p)(x,s) &=& \frac{\omega \, \beta_{0}}{c_{p}} \,\, s \,\, \int_{\Omega} \mathcal{L}ap(G)(x,y,s) \,\,  \Im\left( \varepsilon \right)(y) \, \left\vert E \right\vert^{2}(y) \,\, dy \\
&=&  \frac{\omega \, \beta_{0}}{c_{p}} \,\,  s \,\, \mathcal{L}ap\left( \int_{\Omega} G(x,y,t)  \,\, \Im\left( \varepsilon \right)(y) \, \left\vert E \right\vert^{2}(y) \,\, dy \right)(s). 
\end{eqnarray*}   
Again, using the fact that $G(x,0) = 0$, for $x \in \mathbb{R}^{3}$, and the Laplace transform we rewrite the previous formula as
\begin{equation*}
\mathcal{L}ap(p)(x,s) = \frac{\omega \, \beta_{0}}{c_{p}} \,\, \mathcal{L}ap\left(  \partial_{t} \,\, \int_{\Omega}  \, G(x,y,t)  \,\, \Im\left( \varepsilon \right)(y) \, \left\vert E \right\vert^{2}(y) \,\, dy \right)(s).
\end{equation*}
Finally, by taking the inverse Laplace  transform in both sides we obtain 
\begin{eqnarray*}
p(x,t) &=&  \frac{\omega \, \beta_{0}}{c_{p}} \,\, \partial_{t} \, \int_{\Omega}  \, G(x,y,t)  \,\, \Im\left( \varepsilon \right)(y) \, \left\vert E \right\vert^{2}(y) \,\, dy. 
\end{eqnarray*}
\subsection{Proof of Lemma $\ref{LemmaCoreaFormula}$.}\label{appendixtwo} First, 
\begin{eqnarray*}
\int_{0}^{s} \int_{\Omega \cap RS(x,r)} \, f(y) \, d\sigma(y) \, dr &=& \int_{0}^{s} \int_{RS(x,r)} \chi_{\Omega}(y) \, f(y) \, d\sigma(y) \, dr \\ &\overset{(\ref{RS})}{=}& \int_{0}^{s} \int_{\{ \tau(x,\cdot)=r \}} \chi_{\Omega}(y) \, f(y) \, d\sigma(y) \, dr  \\
&=& \int_{-\infty}^{+\infty} \chi_{[0,s]}(r) \int_{\{ \tau(x,\cdot)=r \}} \chi_{\Omega}(y) \, f(y) \, d\sigma(y) \, dr \\
&=& \int_{-\infty}^{+\infty} \left(  \int_{\{ \tau(x,\cdot)=r \}} \chi_{\Omega}(y) \, f(y) \, \chi_{\{ \tau(x,y) \leq s \}}(y) d\sigma(y) \right) dr, 
\end{eqnarray*}
then, using Coarea formula, see for instance Theorem 5 in Appendix C of \cite{Evans}, we obtain: 
\begin{eqnarray*}
\int_{0}^{s} \int_{\Omega \cap RS(x,r)} \, f(y) \, d\sigma(y) \, dr &=&   \int_{\mathbb{R}^{3}} \chi_{\Omega}(y) \, f(y) \, \chi_{\{ \tau(x,y) \leq s \}}(y) \, \left\vert \underset{y}{\nabla} \tau(x,y) \right\vert \, dy  \\
&=&   \int_{\Omega \cap \{ \tau(x, \cdot) \leq s \}}  \, f(y) \, \left\vert \underset{y}{\nabla} \tau(x,y) \right\vert \, dy  
\overset{(\ref{RV})}{=} \int_{\Omega \cap RV(x,s)}  \, f(y) \, \left\vert \underset{y}{\nabla} \tau(x,y) \right\vert \, dy.
\end{eqnarray*}
This ends the proof. 
\subsection{Regularity of the electric field $u_{1}:= E$}\label{33RegE}
The goal of this subsection is to justify the $\mathbb{L}^{2}(\mathbb{R}^{3})-$integrability of the source term given by $\dfrac{\omega \, \beta_{0}}{c_{p}} \, Im\left( \varepsilon \right) \, \left\vert E \right\vert^{2}$. Thanks to the fact that $Im\left( \varepsilon \right)$ is smooth function and vanishing one outside $\Omega$, we reduce the computations check to the $\mathbb{L}^{4}(\Omega)-$integrability of the electric field $u_{1}$ solution of the following Lippmann-Schwinger equation (L.S.E), 
\begin{equation*}
u_{1}(x) + \omega^{2} \, \mu \, \int_{D} \Pi(x,y) \cdot u_{1}(y) \, \left(\epsilon_{0}(y) - \epsilon_{p} \right) \, dy = u_{0}(x), \quad x \in \mathbb{R}^{3},
\end{equation*}
where $\Pi(\cdot,\cdot)$ is the Green tensor of the Maxwell problem with variable permittivity function.
In Theorem 2.1 of\cite{AhceneMouradMaxwell}, the previous integral representation was given sense. In addition, the coming decomposition of $\Pi(\cdot,\cdot)$ is proved
\begin{equation}\label{KernelDecomposition}
\Pi(x,y) =  \frac{1}{\omega^{2} \, \mu \, \epsilon_{0}(y)} \underset{y}{\nabla} \underset{y}{\nabla} \Phi_{0}(x,y) - \frac{1}{\omega^{2} \, \mu \, \left( \epsilon_{0}(y) \right)^{2}} \underset{x}{\nabla} \underset{x}{\nabla} M \left( \Phi_{0}(\cdot, y) \nabla \epsilon_{0}(y) \right)(x) + R(x,y), \quad x \neq y,
\end{equation}
where, for arbitrary small $\delta >0$, we have $R(\cdot,y) \in \mathbb{L}^{3-\delta}(D)$. Here $\Phi_{0}(x,y)$ is fundamental solution of the pure Laplacian, $\Phi_{0}(x,y):=\frac{1}{4\pi \vert x-y\vert}$. 
\begin{remark}\label{RemarkAppendix}
In $(\ref{KernelDecomposition})$, from singularity analysis point of view, we can approximate the second kernel, as 
\begin{equation*}
\underset{x}{\nabla} \underset{x}{\nabla} M \left( \Phi_{0}(\cdot, y) \nabla \epsilon_{0}(y) \right)(x) \simeq \underset{x}{\nabla} \left( \Phi_{0}(x, y) \nabla \epsilon_{0}(y) \right),
\end{equation*}
and the third kernel as   
\begin{equation*}
R(x,y) \simeq \Phi_{0}(x, y).
\end{equation*}
\end{remark}
Now, using the decomposition $(\ref{KernelDecomposition})$ the L.S.E becomes, 
\begin{eqnarray}\label{LSEE}
\nonumber
u_{1}(x) &-& \underset{x}{\nabla} \int_{D}   \underset{y}{\nabla} \Phi_{0}(x,y)  \cdot u_{1}(y)  \frac{\left(\epsilon_{0}(y) - \epsilon_{p} \right)}{\epsilon_{0}(y)}  \, dy  \\ \nonumber
 &-&  \, \int_{D}  \underset{x}{\nabla} \underset{x}{\nabla} M \left( \Phi_{0}(\cdot, y) \nabla \epsilon_{0}(y) \right)(x)  \cdot u_{1}(y) \, \frac{\left(\epsilon_{0}(y) - \epsilon_{p} \right)}{\left( \epsilon_{0}(y) \right)^{2}} \, dy  \\
 &+& \omega^{2} \, \mu \, \int_{D} R(x,y)  \cdot u_{1}(y) \, \left(\epsilon_{0}(y) - \epsilon_{p} \right) \, dy = u_{0}(x),
\end{eqnarray}
and thanks to Remark $\ref{RemarkAppendix}$, we can approximate the study of $(\ref{LSEE})$ by the following simplest equation 
\begin{equation}\label{LSEgradM}
u_{1}(x) - \nabla M \left( u_{1} \frac{\left(\epsilon_{0} - \epsilon_{p} \right)}{\epsilon_{0}} \right)(x) - \nabla N \left( \nabla \epsilon_{0} \cdot u_{1} \frac{\left(\epsilon_{0} - \epsilon_{p} \right)}{(\epsilon_{0})^{2}} \right)(x) + \omega^{2} \, \mu \, N \left( u_{1} \left(\epsilon_{0} - \epsilon_{p} \right) \right)(x) =  u_{0}(x),
\end{equation}
where $N(\cdot)$ is the Newtonian operator defined from $\mathbb{L}^{2}(D)$ to $\mathbb{L}^{2}(\Omega)$ by: 
\begin{equation*}
N(f)(x) := \int_{D} \Phi_{0}(x,y) \, f(y) \, d \, y = \int_{D} \frac{1}{\left\vert x - y \right\vert} \, f(y) \, d \, y.
\end{equation*}
To justify that $u_{1} \in \mathbb{L}^{4}(\Omega)$, we split our computation into two steps. First, we prove that $u_{1} \in \mathbb{L}^{4}(D)$. After restricting the equation $(\ref{eq:electromagnetic_scattering})$ into $D$, of course using also the fact that $\varepsilon(\cdot)_{|_{D}} := \epsilon_{p}$, see $(\ref{permittivityfct})$, we deduce that $\div(u_{1}) = 0$ and then,  by an integration by parts, we obtain:
\begin{equation}\label{IBPgradM}
\nabla M \left( u_{1} \frac{\left(\epsilon_{0} - \epsilon_{p} \right)}{\epsilon_{0}} \right) = - \nabla N \left( u_{1} \cdot \nabla  \frac{\left(\epsilon_{0} - \epsilon_{p} \right)}{\epsilon_{0}} \right) + \nabla S \left( \nu \cdot u_{1} \,\,  \frac{\left(\epsilon_{0} - \epsilon_{p} \right)}{\epsilon_{0}} \right),
\end{equation}
where $S(\cdot)$ is the Single-Layer operator with vanishing frequencies defined, from $\mathbb{L}^{2}(\partial B)$ to $\mathbb{H}^{\frac{3}{2}}(B)$, by:
\begin{equation*}
S(f)(x) := \int_{\partial D} \Phi_{0}(x,y) \, f(y) \, d \sigma (y) = \int_{\partial D} \frac{1}{\left\vert x - y \right\vert} \, f(y) \, d \sigma (y).
\end{equation*}
Hence, using $(\ref{IBPgradM})$, the equation $(\ref{LSEgradM})$ becomes,
\begin{eqnarray*}
u_{1}(x) - \nabla S \left( \nu \cdot u_{1}  \frac{\left(\epsilon_{0} - \epsilon_{p} \right)}{\epsilon_{0}} \right)(x) &=&  u_{0}(x) - \nabla N \left( u_{1} \cdot \nabla \frac{\left(\epsilon_{0} - \epsilon_{p} \right)}{\epsilon_{0}} \right)(x) \\ &-&  \omega^{2} \, \mu \, N \left( u_{1} \left(\epsilon_{0} - \epsilon_{p} \right) \right)(x) +  \nabla N \left( \nabla \epsilon_{0} \cdot u_{1} \frac{\left(\epsilon_{0} - \epsilon_{p} \right)}{(\epsilon_{0})^{2}} \right)(x), \quad x \in D. 
\end{eqnarray*}

To write a closed system of equations, we must take the normal derivative, from inside $D$, of the previous equation and we use the jump relations of the derivative of the Single-Layer operator to obtain: 
\begin{eqnarray}\label{BoundaryEqua}
\nonumber
\left[ \frac{\left(\epsilon_{0}(\cdot) + \epsilon_{p} \right)}{2 \, ( \epsilon_{0}(\cdot) - \epsilon_{p})} \; I -   K^{\star} \right] \, \left( \nu \cdot u_{1} \frac{( \epsilon_{0}(\cdot) - \epsilon_{p})}{\epsilon_{0}(\cdot)} \right)(x) &=& \nu \cdot u_{0}(x) - \omega^{2} \, \mu \, \nu \cdot N \left( u_{1} \left(\epsilon_{0} - \epsilon_{p} \right) \right)(x) \\ \nonumber &-&  \nu \cdot \nabla N \left( u_{1} \cdot \nabla  \frac{\left(\epsilon_{0} - \epsilon_{p} \right)}{\epsilon_{0}} \right)(x) \\ 
& + &  \nu \cdot \nabla N \left(\nabla \epsilon_{0} \cdot u_{1} \frac{(\epsilon_{0}-\epsilon_{p})}{(\epsilon_{0})^{2}} \right)(x), \qquad x \in \partial D,
\end{eqnarray}
where $K^{\star}(\cdot)$ is the Neumann-Poincar\'{e} operator defined and continuous, in the case where $\partial D$ is of class $ \mathcal{C}^{2}$ , from $\mathbb{H}^{-\frac{1}{2}}(\partial D)$ to $\mathbb{H}^{\frac{1}{2}}(\partial D)$, by\footnote{The notation $p.v$ refers to the Cauchy principal value.} 
\begin{equation*}
K^{\star}(f)(x) := p.v \int_{\partial D} \frac{\partial \;\; \Phi_{0}}{\partial \nu(x)}(x,y) \, f(y) \, d\sigma(y).
\end{equation*}
Then from $(\ref{BoundaryEqua})$, as the right hand side is a regular term, we get $\dfrac{\epsilon_{0}(\cdot) - \epsilon_{p}}{\epsilon_{0}(\cdot)} \, \nu \cdot u_{1} \in \mathbb{H}^{\frac{1}{2}}(\partial D)$ and then $\nu \cdot u_{1} \in \mathbb{H}^{\frac{1}{2}}(\partial D)$. Hence, $u_{1} \in \mathbb{L}^{2}(D), \,\, Curl(u_{1}) \in \mathbb{L}^{2}(D), \,\, \nu \cdot u_{1} \in \mathbb{H}^{\frac{1}{2}}(\partial D)$ and $\div(u_{1}) = 0$, then from \cite[Inequality (1.4)]{Amrouche-Houda} we deduce that  $u_{1} \in \mathbb{W}^{1,2}(D)$. Thanks to Rellich-Kondrachov theorem, see \cite[Theorem 9.16]{brezis2011functional}, we have the following compact injection $\mathbb{W}^{1,2}(D) \hookrightarrow \mathbb{L}^{4}(D)$. Therefore, $u_{1} \in \mathbb{L}^{4}(D).$ To show that $u_{1} \in \mathbb{L}^{4}(\Omega \setminus D)$, we rewrite $(\ref{LSEgradM})$ as 
\begin{equation*}
u_{1}(x) + \underset{x}{\nabla} \underset{x}{\nabla} N\left( u_{1} \frac{\left(\epsilon_{0} - \epsilon_{p} \right)}{\epsilon_{0}} \right)(x) - \nabla N \left( \nabla \epsilon_{0} \cdot u_{1} \frac{\left(\epsilon_{0} - \epsilon_{p} \right)}{(\epsilon_{0})^{2}} \right)(x) + \omega^{2} \, \mu \, N \left( u_{1} \left(\epsilon_{0} - \epsilon_{p} \right) \right)(x) =  u_{0}(x),
\end{equation*}
for $x \in \Omega$ and use the Calderon-Zygmund inequality, see for instance \cite[ page 242]{gilbarg2001elliptic},  to deduce that $u_{1} \in \mathbb{L}^{4}(\Omega)$.  

\section{Appendix. Estimation of the zeros of the dispersive equation $f_n(\omega, z)=0$}\label{Justification-omega-n-0}

For every $z\in \Omega$ and for every $n \in \mathbb{N}$, the dispersion equation 
\begin{equation}\label{DEqua}
f_n(\omega, z):=\epsilon_0(z) - (\epsilon_0(z)-\epsilon_{p}(\omega))\lambda_n=0,
\end{equation}
has one and only one solution in the complex plan, that we denote by $\omega_{n, \mathbb{C}}$, given by: 
\begin{equation}\label{solKAS}
\omega_{n, \mathbb{C}} = \frac{i \, \gamma_{p} \mp \sqrt{- \gamma_{p}^{2} + 4 \, \left( \omega^{2}_{0} + \dfrac{\epsilon_{\infty} \, \lambda_{n} \, \omega_{p}^{2}}{\epsilon_{\infty} \, \lambda_{n} + (1 - \lambda_{n}) \, \epsilon_{0}(z)} \right)}}{2},
\end{equation}
with it's dominant part contained in the interval $\left( \omega_{0};~~\sqrt{ \omega^{2}_{p} + \omega^{2}_{0}} \right)$. 
Our aims here is, for fixed $n_{0} \in \mathbb{N}$, to get an approximate root in the positive reel line to $\omega_{n_{0}, \mathbb{C}}$. To this end, we start by taking the Imaginary part in both sides of the dispersion equation given by $(\ref{DEqua})$, to obtain: 
\begin{eqnarray*}
\Im \left( f_{n_{0}}(\omega, z) \right) &=& \Im \left( \epsilon_0(z) - (\epsilon_0(z)-\epsilon_{p}(\omega))\lambda_{n_{0}} \right) \\
\Im \left( f_{n_{0}}(\omega, z) \right) &\overset{(\ref{plasmonic})}{=}& (1 - \lambda_{n_{0}}) \, \Im(\epsilon_{0}(z)) - \frac{\lambda_{n_{0}} \, \epsilon_{\infty} \, \omega_{p}^{2} \, \omega \, \gamma_{p}}{(\omega_{0}^{2} - \omega^{2})+(\omega \, \gamma_{p})^{2}} \\
\left\vert \Im \left( f_{n_{0}}(\omega, z) \right) \right\vert & \lesssim & \left\Vert \Im(\epsilon_{0}(\cdot)) \right\Vert_{\mathbb{L}^{\infty}(\Omega)} + \gamma_{p} \overset{(\ref{SELZ})}{=} \bm{\gamma}  + \mathcal{O}(\gamma_{p}), 
\end{eqnarray*} 
uniformly with respect to $\omega$. This implies the smallness of the imaginary part of the dispersion equation. Analogously, by taking the reel part in both sides of the dispersion equation  $(\ref{DEqua})$, we obtain: 
\begin{eqnarray*}
\Re \left( f_{n_{0}}(\omega, z) \right) &=& \Re \left( \epsilon_0(z) - (\epsilon_0(z)-\epsilon_{p}(\omega))\lambda_{n_{0}} \right) \\
\Re \left( f_{n_{0}}(\omega, z) \right) & \overset{(\ref{plasmonic})}{=}& (1 - \lambda_{n_{0}}) \, \Re(\epsilon_{0}(z)) + \lambda_{n_{0}} \, \epsilon_{\infty} \, \left[ 1 + \frac{\omega_{p}^{2} \, \left( \omega^{2}_{0} - \omega^{2} \right)}{(\omega_{0}^{2} - \omega^{2})+(\omega \, \gamma_{p})^{2}} \right],
\end{eqnarray*}
and investigating a solution for the equation $\Re \left( f_{n_{0}}(\omega, z) \right) = 0$, will be reduced to solve the quadratic equation, with respect to the unknown $\omega^{2}$, given by
\begin{eqnarray}\label{Equafreq}
\nonumber
\omega^{4} &+& \omega^{2} \left( - 2 \, \omega_{0}^{2} + \gamma_{p}^{2} - \frac{\omega_{p}^{2} \, \lambda_{n_{0}} \, \epsilon_{\infty}}{\lambda_{n_{0}} \, \epsilon_{\infty} + (1 - \lambda_{n_{0}}) \, \Re(\epsilon_{0}(z)) } \right) \\
&+& \omega_{0}^{4} +  \frac{\omega_{0}^{2} \, \omega_{p}^{2} \, \lambda_{n_{0}} \, \epsilon_{\infty}}{\lambda_{n_{0}} \, \epsilon_{\infty} + (1 - \lambda_{n_{0}}) \, \Re(\epsilon_{0}(z)) } = 0.
\end{eqnarray}
Now, by computing it's corresponding discriminant we have:
\begin{eqnarray}\label{Deltaapp}
\nonumber
\Delta &=& \left( - 2 \, \omega_{0}^{2} + \gamma_{p}^{2} - \frac{\omega_{p}^{2} \, \lambda_{n_{0}} \, \epsilon_{\infty}}{\lambda_{n_{0}} \, \epsilon_{\infty} + (1 - \lambda_{n_{0}}) \, \Re(\epsilon_{0}(z)) } \right)^{2} - 4 \, \left( \omega_{0}^{4} +  \frac{\omega_{0}^{2} \, \omega_{p}^{2} \, \lambda_{n_{0}} \, \epsilon_{\infty}}{\lambda_{n_{0}} \, \epsilon_{\infty} + (1 - \lambda_{n_{0}}) \, \Re(\epsilon_{0}(z)) } \right) \\
&=& \left( \frac{\omega_{p}^{2} \, \lambda_{n_{0}} \, \epsilon_{\infty}}{\lambda_{n_{0}} \, \epsilon_{\infty} + (1 - \lambda_{n_{0}}) \, \Re(\epsilon_{0}(z)) } \right)^{2} + \mathcal{O}\left(\gamma_{p}^{2} \right) > 0 .
\end{eqnarray}
Hence, by denoting $\omega^{2}_{n_{0}}$ the associated solution to $(\ref{Equafreq})$, we obtain: 
\begin{eqnarray*}
\omega^{2}_{n_{0}} &=& \frac{- \left( - 2 \, \omega_{0}^{2} + \gamma_{p}^{2} - \dfrac{\omega_{p}^{2} \, \lambda_{n_{0}} \, \epsilon_{\infty}}{\lambda_{n_{0}} \, \epsilon_{\infty} + (1 - \lambda_{n_{0}}) \, \Re(\epsilon_{0}(z)) } \right) \pm \sqrt{\Delta}}{2} \\
&\overset{(\ref{Deltaapp})}{=}& \frac{\left(  2 \, \omega_{0}^{2} - \gamma_{p}^{2} + \dfrac{\omega_{p}^{2} \, \lambda_{n_{0}} \, \epsilon_{\infty}}{\lambda_{n_{0}} \, \epsilon_{\infty} + (1 - \lambda_{n_{0}}) \, \Re(\epsilon_{0}(z)) } \right) \pm \left( \dfrac{\omega_{p}^{2} \, \lambda_{n_{0}} \, \epsilon_{\infty}}{\lambda_{n_{0}} \, \epsilon_{\infty} + (1 - \lambda_{n_{0}}) \, \Re(\epsilon_{0}(z)) } \right)}{2}  + \mathcal{O}\left(\gamma_{p}^{2} \right) \\
&=& \frac{\left(  2 \, \omega_{0}^{2} + \dfrac{\omega_{p}^{2} \, \lambda_{n_{0}} \, \epsilon_{\infty}}{\lambda_{n_{0}} \, \epsilon_{\infty} + (1 - \lambda_{n_{0}}) \, \Re(\epsilon_{0}(z)) } \right) \pm \left( \dfrac{\omega_{p}^{2} \, \lambda_{n_{0}} \, \epsilon_{\infty}}{\lambda_{n_{0}} \, \epsilon_{\infty} + (1 - \lambda_{n_{0}}) \, \Re(\epsilon_{0}(z)) } \right)}{2}  + \mathcal{O}\left(\gamma_{p}^{2} \right). 
\end{eqnarray*}
To avoid the trivial solution $\omega^{2}_{n_{0}} = \omega_{0}^{2} + \mathcal{O}\left(\gamma_{p}^{2} \right)$, we choose in the previous formula the positive sign to end with:
\begin{equation*}
\omega^{2}_{n_{0}} = \left( \omega_{0}^{2}  + \dfrac{\omega_{p}^{2} \, \lambda_{n_{0}} \, \epsilon_{\infty}}{\lambda_{n_{0}} \, \epsilon_{\infty} + (1 - \lambda_{n_{0}}) \, \Re(\epsilon_{0}(z)) } \right)  + \mathcal{O}\left(\gamma_{p}^{2} \right),
\end{equation*}
then 
\begin{equation*}
\omega_{n_{0}} = \left( \omega_{0}^{2}  + \dfrac{\omega_{p}^{2} \, \lambda_{n_{0}} \, \epsilon_{\infty}}{\lambda_{n_{0}} \, \epsilon_{\infty} + (1 - \lambda_{n_{0}}) \, \Re(\epsilon_{0}(z)) } \right)^{\frac{1}{2}}  + \mathcal{O}\left(\gamma_{p}^{2} \right).
\end{equation*} 
Next, for the previous solution, we set it's  dominant part to be: 
\begin{equation}\label{appsolKAS}
\bm{\omega_{n_{0}}} :=  \left( \omega_{0}^{2}  + \dfrac{\omega_{p}^{2} \, \lambda_{n_{0}} \, \epsilon_{\infty}}{\lambda_{n_{0}} \, \epsilon_{\infty} + (1 - \lambda_{n_{0}}) \, \Re(\epsilon_{0}(z)) } \right)^{\frac{1}{2}}.
\end{equation}
Additionally, in effortless manner,  we can check that
\begin{eqnarray*}
f_{n_{0}} \left(\bm{\omega_{n_{0}}}, z \right) &=& i \, \Im \left(\epsilon_{0}(z) \right) \, (1 - \lambda_{n_{0}}) + \mathcal{O}\left( \gamma_{p} \right) = \bm{\gamma} + \mathcal{O}\left( \gamma_{p} \right) \\
\left\vert f_{n_{0}}\left(\bm{\omega_{n_{0}}}, z \right) \right\vert & \leq & \left\Vert \Im \left(\epsilon_{0}(\cdot) \right) \right\Vert_{\mathbb{L}^{\infty}(\Omega)} + \mathcal{O}\left( \gamma_{p} \right) \overset{(\ref{SELZ})}{=} \bm{\gamma}  + \mathcal{O}(\gamma_{p}).
\end{eqnarray*}
Consequently, from  $(\ref{1Coro})$, we deduce that $p^{\star}(x,s,\cdot)$ admits a peaks near $\bm{\omega_{n_{0}}}$.
Therefore by plotting the curve $\omega \rightarrow p^{\star}(x,s,\omega)$, for $x \in \partial \Omega$ and $s> \tau_2(x, z)$ fixed, in the interval $\left( \omega_{0};~~\omega_{max} \right)$, see Figure \ref{ReImp}, we can estimate $\bm{\omega_{n_0}}$ and hence reconstruct $\epsilon_0(z)$ as
\begin{equation*}
\epsilon_0(z) = \frac{\lambda_{n_{0}}}{\left(\lambda_{n_{0}} - 1 \right)} \, \epsilon_{p}(\bm{\omega_{n_0}}),
\end{equation*}
which is an approximate solution to the exact one defined, i.e. constructed, by  $\dfrac{\lambda_{n_{0}}}{\left(\lambda_{n_{0}} - 1 \right)} \, \epsilon_{p}(\omega_{n_0,\mathbb{C}})$. More precisely we have, 
\begin{eqnarray*}
Error &:=& \frac{\lambda_{n_{0}}}{\left(\lambda_{n_{0}} - 1 \right)} \, \epsilon_{p}(\bm{\omega_{n_0}}) - \frac{\lambda_{n_{0}}}{\left(\lambda_{n_{0}} - 1 \right)} \, \epsilon_{p}(\omega_{{n_0},\mathbb{C}}) \\
Error & \overset{(\ref{plasmonic})}{=} & \frac{\lambda_{n_{0}} \, \epsilon_{\infty} \, \omega^{2}_{p}}{\left(\lambda_{n_{0}} - 1 \right)} \, \frac{\left(\bm{\omega_{n_0}}^{2} - \omega_{{n_0},\mathbb{C}}^{2} \right) + i \, \gamma_{p} \, \left(\omega_{{n_0},\mathbb{C}} - \bm{\omega_{n_0}} \right)}{\left(\omega^{2}_{0} - \bm{\omega_{n_0}}^{2} + i \, \gamma_{p} \,  \bm{\omega_{n_0}} \right) \, \left(\omega^{2}_{0} - \omega_{n_0,\mathbb{C}}^{2} + i \, \gamma_{p} \,  \omega_{n_0,\mathbb{C}} \right)} \\
\left\vert Error \right\vert & \lesssim & \left\vert \bm{\omega_{n_0}}^{2} - \omega_{{n_0},\mathbb{C}}^{2} \right\vert + \mathcal{O}\left( \gamma_{p} \right). 
\end{eqnarray*}
Then, using $(\ref{appsolKAS})$ and $(\ref{solKAS})$, we deduce
\begin{equation*}
\left\vert Error \right\vert  \lesssim  \Im(\epsilon_{0}(z)) + \mathcal{O}\left( \gamma_{p} \right) \leq \left\Vert  \Im(\epsilon_{0}(\cdot)) \right\Vert_{\mathbb{L}^{\infty}(\Omega)} + \mathcal{O}\left( \gamma_{p} \right)\overset{(\ref{SELZ})}{=} \bm{\gamma}  + \mathcal{O}(\gamma_{p}).
\end{equation*}

\thispagestyle{empty}
\mbox{}

\end{document}